\newtheorem{definition}{Definition}[section]
\newtheorem{remark}[definition]{Remark}
\newtheorem{proposition}[definition]{Proposition}
\newtheorem{lemma}[definition]{Lemma}
\newtheorem{theorem}[definition]{Theorem}
\begin{document}

\bigskip

\title{Envelope of intermediate lines of a plane curve}

\author[]{Ady Cambraia Jr.}

\address[Ady]{Departamento de Matem\'atica, Universidade Federal de Vi\c cosa, Brazil}
\email{ady.cambraia@ufv.br}

\author[]{Mostafa Salarinoghabi}
\address[Mostafa]{%
	Departamento de Matem\'atica, Universidade Federal de Vi\c cosa, Brazil.
}
\email{mostafa.salarinoghabi@ufv.br}

\author[]{Diego Trindade}
\address[Diego]{%
	Departamento de Matem\'atica, Universidade Federal de Vi\c cosa, Brazil.
}
\email{diego.trindade@ufv.br}

\thanks{The authors wish to express their gratitude to Richard Morris for his help in construction of figures. Also the authors thanks to Marcos Craizer for his valuable suggestions. The second author wants to thank PNPD-CAPES for financial support during the preparation of this paper.}

 
\begin{abstract}
	
	For a pair of  points in a smooth closed convex planar curve $\gamma$, its mid-line is the line containing its mid-point and the intersection point of the corresponding pair of tangent lines. It is well known that the envelope of the mid-lines ($EML$) is formed by the union of three affine invariants sets: Affine Envelope Symmetry Sets ($AESS$); Mid-Parallel Tangent Locus ($MPTL$) and Affine Evolute of $\gamma$. In this paper, we generalized these concepts by considering the envelope of the intermediate lines. For a pair of points of $\gamma$, its intermediate line is the line containing an intermediate point and the intersection point of the corresponding pair of tangent lines. Here, we present the envelope of intermediate lines ($EIL$) of the curve $\gamma$ and prove that this set is formed by three disconnected sets when the intermediate point is different from the mid-point: Affine Envelope of Intermediate Lines ($AEIL$); the curve $\gamma$ itself and the Intermediate-Parallel Tangent Locus ($IPTL$). When the intermediate point coincides with the mid-point, the $EIL$ coincides with the $EML$, and thus these sets are connected. Moreover, we introduce some standard techniques of singularity theory and use them to explain the local behavior of this set.
\end{abstract}
	

	\keywords{Affine space; mid-lines; conormal map; $MPTL$; $AESS$}
	\subjclass[2010]{53A15}

\maketitle

\section{Introduction}\label{intro}
Consider two different points $p_1$, $p_2$ of a smooth planar curve $\gamma$ which has non-parallel tangent lines at $p_1$ and $p_2$. The mid-line is the line that passes through the mid-point $M$ of $p_1$ and $p_2$ and the intersection of the tangent lines at $p_1$ and $p_2$. If these tangent lines are parallel, the mid-line is the line which passes through the point $M$ and is parallel to the both tangent lines. When $p_1=p_2$, the mid-line is just the affine normal at the point. The envelope of these mid-lines is an important affine invariant set associated with the curve, which has been studied by many authors  (see for instance \cite{giblin,Holtom,Warder,Bruce}). In \cite{ady1,ady2} the authors presented a generalization of these concepts for hypersurfaces in $\mathbb{R}^n$.

The {\it Envelope of Mid-Lines}, (denoted by $EML$), of the curve $\gamma$ is divided into three parts: The {\it Affine Envelope Symmetry Set} (denoted by $AESS$), the {\it Mid-Parallel Tangent Locus} (denoted by $MPTL$) and the {\it Affine Evolute}, which corresponded to the pairs of points $(p_1,p_2)$ with non-parallel, non-coincident parallel and coincident tangent lines, respectively. 
The geometry of these curves has also been extensively studied by many mathematicians (see for example \cite{giblin},\cite{Holtom}).

Unsurprisingly, the envelope of the tangent lines to $\gamma$ contains -at least- $\gamma$ itself. It is natural to ask what lies between the envelope of tangents and the envelope of mid-lines. Consider two disjoint points $p_1$, $p_2$ of the curve $\gamma$ with non-parallel tangent lines. An intermediate point of the points $p_1$ and $p_2$ is the point $M_\alpha=(1-\alpha)p_1+\alpha p_2$ with $\alpha \in [0,1]$. The intermediate line is the line which passes through the intersection point of the tangent lines  at $p_1$ and $p_2$ and the point $M_{\alpha}$ (Figure \ref{fig:Inter} illustrates an intermediate line for the points $p_1$ and $p_2$). 
\begin{figure}[tp]
	\includegraphics[scale=0.4]{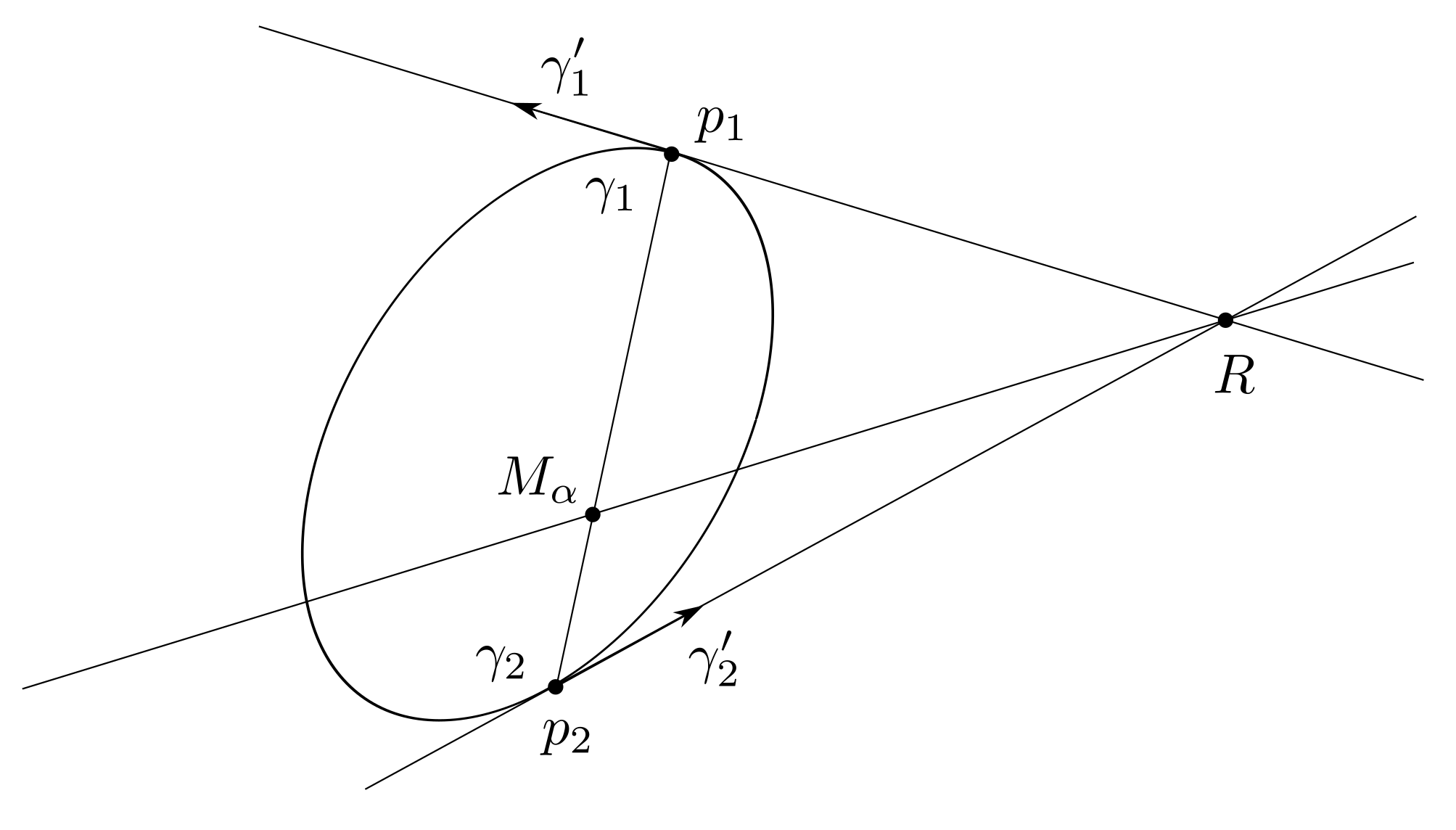}
	\caption{Intermediate line of the points $p_1$ and $p_2$.}\label{fig:Inter}
\end{figure}
When $\alpha=0$ or $\alpha=1$ the intermediate line corresponds to the tangent lines in points $p_1$ and $p_2$ respectively. When $\alpha=1/2,$ the intermediate line corresponds to the mid-line.
Naturally, the {\it Envelope of Intermediate Lines} (denoted by $EIL$) is also divided into three subsets:  The \textit{Affine Envelope of Intermediate Lines} (denoted by $AEIL$), which corresponds to the pairs of points $(p_1,p_2)$ with non-parallel tangent lines; the \textit{Intermediate-Parallel Tangent Locus} (denoted by $IPTL$) is consonant with the pairs of disjoint points $(p_1,p_2)$ with parallel tangent lines; and another curve called \textit{Coincident Tangent Lines}, (denoted by $CTL$), which corresponds to the limit case of coincident points.
This article aims to explain how some basic techniques of Affine Geometry and Singularity Theory enable us to say exactly what happens with the envelope of intermediate lines when $\alpha$ varies in $[0,1]\setminus \{1/2\}$. Precisely speaking, we will obtain the accurate conditions for which the cusps appear and disappear for a fixed $\alpha$ and, crucially, explaining the contribution of inflexions. 

We demonstrate that, unlike the case of $\alpha=1/2$ in which $AESS$ and $MPTL$ meet at their ordinary cusp singularities, when $\alpha\neq 1/2,$ $AEIL$ and $IPTL$ are disjointed.

The Affine Evolute of a planar curve is the limit of the $EML$, when $p_1=p_2$. In this paper we study the corresponding limit set $CTL$ and prove that this set is the curve $\gamma$ itself, when $\alpha\neq 1/2$.

The paper is organized as follows: In section \ref{sec:preliminaries}, we review some basic concepts of the Affine Differential Geometry of curves in $\mathbb{R}^2$. In section \ref{sec:AEIL}, we study the $AEIL$, giving necessary and sufficient conditions for a pair $(p_1,p_2)$ to contribute to this set. 
In section \ref{sec:local}, we investigate the local behavior of the curves $AEIL$ and $IPTL$ and prove that the set $CTL$ is the proper curve $\gamma$. In this section, we also study non-oval smooth curves. 


\section{Preliminaries}\label{sec:preliminaries}

In this section, we present the basic concepts of Affine Differential Geometry of planar smooth curves. We refer to \cite{Nomizu,Buchin} for more details on this subject.

Let $\gamma: [0,1] \longrightarrow \mathbb{R}^2$ be a $\mathcal{C}^{\infty}$ plane curve, without self-intersections or cusps, parametrized by $t$. The planar affine differential geometry is mainly focused on defining a new parametrization, $s$, which is an affine-invariant. The simplest affine-invariant parametrization $s$ for the curve $\gamma$ satisfies the following relation, 
\begin{equation}\label{pcaa}
[ \gamma_s, \gamma_{ss}]=1,
\end{equation}

\noindent where $[\,,\,]$ is the notation for determinants. 
When a curve satisfies equation \eqref{pcaa}, we say it this
parameterized by affine arc length parametrization $s$.

The vectors $\gamma_s$ and $\gamma_{ss}$ are the affine tangent and the affine normal of the curve $\gamma$, respectively.

The parameters $s$ and $t$ are related by 
$$ [\gamma_t,\gamma_{tt}]=\left[\gamma_s\,s_t\,,\,
\gamma_{ss}\,s_t^2\,+\,\gamma_s\,s_{tt}\right]=s_t^3[\gamma_s,\gamma_{ss}]=s_t^3,$$
therefore,
$$ \dfrac{ds}{dt}= [\gamma_t,\gamma_{tt}]^{\frac{1}{3}}.$$
By differentiating the equation \eqref{pcaa}, we obtain
$$ [\gamma_s, \gamma_{sss}]=0 \Rightarrow
\gamma_{sss}+ \mu(s)\gamma_s=0,$$ for some $\mu(s) \in \mathbb{R}.$
The function $\mu(s)$ is called the affine curvature and is
the simplest non-trivial affine differential invariant, defining 
up to an affine transformation. Notice that
$$ [\gamma_{s}, \gamma_{sss}]=0  \Rightarrow \gamma_{sss}=-\mu(s)\gamma_s,$$ therefore, we conclude that
$$\mu(s)=[\gamma_{ss}, \gamma_{sss}].$$

\begin{theorem}[\cite{Nomizu}] \label{thm1}
	Plane curves have constant affine curvature if and only if they are conic sections.
\end{theorem}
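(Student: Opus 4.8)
The plan is to prove the two implications separately, using the structure equation $\gamma_{sss} + \mu(s)\gamma_s = 0$ together with the normalization $[\gamma_s,\gamma_{ss}] = 1$ set up above.

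\textbf{Conic sections have constant affine curvature.} Here I would exploit the fact that $\mu$ is an equiaffine invariant and is therefore unchanged under the affine reparametrizations used to bring a conic to normal form. Since every nondegenerate conic is the image, under an equiaffine map, of one of the three models — the circle $x^2+y^2=1$, the parabola $y=x^2$, or the hyperbola $xy=1$ — it suffices to compute $\mu = [\gamma_{ss},\gamma_{sss}]$ for each. For the circle, $\gamma(s)=(\cos s,\sin s)$ already satisfies $[\gamma_s,\gamma_{ss}]=1$ and yields $\mu\equiv 1$. For the parabola and hyperbola one first rescales the natural parameter by the constant factor $[\gamma_t,\gamma_{tt}]^{1/3}$ to obtain the affine arclength parametrization (e.g.\ $\gamma(s)=(2^{-1/3}s,\,2^{-2/3}s^2)$ and $\gamma(s)=(e^{ks},e^{-ks})$ with $k=2^{-1/3}$), and then a direct computation gives $\mu\equiv 0$ and $\mu\equiv$ a negative constant, respectively. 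Thus every conic has constant $\mu$.

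\textbf{Constant affine curvature forces a conic.} Suppose $\mu(s)\equiv c$. Then each coordinate of $\gamma$ satisfies the linear constant-coefficient ODE $\gamma_{sss} + c\,\gamma_s = 0$, whose characteristic polynomial is $r^3 + cr = r(r^2+c)$. Splitting into the cases $c>0$, $c=0$, $c<0$, the general solution takes the form $\gamma(s) = A + B\,f(s) + C\,g(s)$ with constant vectors $A,B,C\in\mathbb{R}^2$, where $(f,g)$ equals $(\cos\sqrt{c}\,s,\ \sin\sqrt{c}\,s)$, or $(s,\ s^2)$, or $(\cosh\sqrt{-c}\,s,\ \sinh\sqrt{-c}\,s)$. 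In each case I would eliminate the parameter $s$ to exhibit an implicit quadratic equation satisfied by $\gamma(s)-A$, identifying the trace as an ellipse, parabola, or hyperbola (based at $A$), and then use $[\gamma_s,\gamma_{ss}] = 1 \neq 0$ to conclude that $B$ and $C$ are linearly independent, so the conic is genuinely nondegenerate rather than a line or a point.

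The only delicate point — the main obstacle, such as it is — is this last nondegeneracy check: a priori the ODE solution could collapse to a line or a point, and one must invoke the condition $[\gamma_s,\gamma_{ss}]=1$ both to exclude this and to fix the compatibility between the sign of $c$ and the constants $B,C$. Once that is handled, the remaining work is a routine integration and elimination of parameters.
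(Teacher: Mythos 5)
The paper does not prove this statement at all: it is quoted verbatim from the reference [Nomizu] (Nomizu--Sasaki, \emph{Affine Differential Geometry}), so there is no in-paper argument to compare yours against. Judged on its own, your proof is the standard one, and the harder direction is sound: from $\mu\equiv c$ you get the constant-coefficient ODE $\gamma_{sss}+c\gamma_s=0$, the three cases of its characteristic roots give trigonometric, polynomial, and hyperbolic/exponential solutions, eliminating $s$ yields a quadratic relation, and the normalization $[\gamma_s,\gamma_{ss}]=1$ correctly forces $[B,C]\neq 0$ (e.g.\ for $c>0$ one computes $[\gamma_s,\gamma_{ss}]=c^{3/2}[B,C]$), ruling out degenerate traces. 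That part I would accept as is.

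The forward direction as written contains a genuine, though easily repaired, error: it is \emph{not} true that every nondegenerate conic is the image under an \emph{equiaffine} map of one of the three models $x^2+y^2=1$, $y=x^2$, $xy=1$. Equiaffine maps preserve area, so an ellipse of area $4\pi$ is not equiaffinely equivalent to the unit circle, and likewise $xy=c$ for varying $c$ are pairwise inequivalent; hence equiaffine invariance of $\mu$ does not reduce the verification to three computations. You can fix this in any of three ways: compute $\mu$ for the one-parameter families of models (a circle of radius $r$ gives $\mu=r^{-4/3}$, still constant); or observe that under a general linear map $L$ the affine curvature transforms by the constant factor $(\det L)^{-2/3}$, so constancy of $\mu$ is invariant under the full affine group, for which the three models do suffice; or deduce the forward direction from your converse computation, since the explicit affine arclength parametrizations you exhibit for the three model families are exactly the solutions of $\gamma_{sss}+c\gamma_s=0$ and every conic is affinely one of them. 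With that patch the argument is complete and matches the classical proof in the cited source.
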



\begin{proposition}\label{Prop:Affine Normal}
	 Let $\gamma: \mathbb{R} \longrightarrow \mathbb{R}$ be a regular plane curve parametrized by an arbitrary parameter $t$. The affine normal $\xi(t)$ at $t$ is given by:
	\begin{eqnarray}\label{eq0}
	\xi(t)=\kappa^{-\dfrac{2}{3}}\gamma_{tt}-\dfrac{1}{3}\kappa_t\kappa^{-\dfrac{5}{3}}\gamma_t
	\end{eqnarray}
\end{proposition}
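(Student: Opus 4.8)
The plan is to obtain $\xi$ directly from its definition as the second derivative $\gamma_{ss}$ of $\gamma$ with respect to the affine arc length, and then to rewrite everything in the arbitrary parameter $t$ by means of the chain rule, using the relation $ds/dt=[\gamma_t,\gamma_{tt}]^{\frac{1}{3}}$ established above. Throughout, I would set $\kappa(t):=[\gamma_t,\gamma_{tt}]$; since $\gamma$ is regular and free of cusps this quantity is nonzero, and one may assume it keeps a constant sign on the arc under consideration (automatic for a locally convex curve), so that the fractional powers appearing below are unambiguous.

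First I would compute the affine tangent. From $s_t=\kappa^{1/3}$ we get $dt/ds=\kappa^{-1/3}$, hence
\[
\gamma_s=\gamma_t\,\frac{dt}{ds}=\kappa^{-1/3}\gamma_t .
\]
Differentiating once more, using $\frac{d}{ds}=\kappa^{-1/3}\frac{d}{dt}$ and $\left(\kappa^{-1/3}\right)_t=-\frac{1}{3}\kappa_t\kappa^{-4/3}$, yields
\[
\xi=\gamma_{ss}=\kappa^{-1/3}\frac{d}{dt}\!\left(\kappa^{-1/3}\gamma_t\right)=\kappa^{-1/3}\!\left(\kappa^{-1/3}\gamma_{tt}-\frac{1}{3}\kappa_t\kappa^{-4/3}\gamma_t\right)=\kappa^{-\frac{2}{3}}\gamma_{tt}-\frac{1}{3}\kappa_t\kappa^{-\frac{5}{3}}\gamma_t,
\]
which is exactly formula \eqref{eq0}.

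To close the argument I would record two consistency checks. Taking $t=s$, so that $\kappa\equiv 1$ and $\kappa_t\equiv 0$, the right-hand side collapses to $\gamma_{ss}$, as it must; and the normalization \eqref{pcaa} is respected, since
\[
[\gamma_s,\xi]=\left[\kappa^{-1/3}\gamma_t,\ \kappa^{-2/3}\gamma_{tt}-\frac{1}{3}\kappa_t\kappa^{-5/3}\gamma_t\right]=\kappa^{-1}[\gamma_t,\gamma_{tt}]=1 .
\]
There is no genuine obstacle in this proposition: its entire content is a two-step application of the chain rule together with careful bookkeeping of the fractional exponents. The only point that deserves an explicit word is the choice of branch of $\kappa^{1/3}$, which is precisely why I would restrict attention to an arc on which $\kappa=[\gamma_t,\gamma_{tt}]$ has constant sign.
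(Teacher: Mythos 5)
Your computation is correct, and it is exactly the calculation the paper has in mind: the paper states this proposition without proof, but the proof it gives for the affine-curvature formula (Proposition \ref{Prop:Affine Curvature}) starts from the very same relations $s_t=\kappa^{1/3}$, $\gamma_s=\kappa^{-1/3}\gamma_t$ and obtains $\xi=\gamma_{ss}$ by the same chain-rule differentiation. Your two consistency checks (reduction to $\gamma_{ss}$ when $t=s$ and the normalization $[\gamma_s,\xi]=1$) are a welcome addition not present in the paper.
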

%
%
%
%
%
The affine curvature of a planar curve $\gamma$ parametrized by an arbitrary parameter is given in the next proposition.

\begin{proposition}\label{Prop:Affine Curvature}
	Let $\gamma$ be a smooth plane curve without inflection points pa\-ra\-me\-tri\-zed by an arbitrary parameter $t$. The affine curvature of $\gamma$ is given by
	\begin{eqnarray} \label{eq00}
	\mu = \dfrac{1}{9}\left(3\kappa\kappa_{tt}-5\kappa_t^2+9\kappa[\gamma_{tt},\gamma_{ttt}]\right)\kappa^{-\frac{8}{3}},
	\end{eqnarray}
	where $\kappa=[\gamma_t,\gamma_{tt}],$ denotes the Euclidean curvature of $\gamma$ and the indexes denote the derivative in respect to the given parameter. 
\end{proposition}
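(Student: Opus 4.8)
The plan is to push everything back from the affine arc length parameter $s$ to the arbitrary parameter $t$, using the relation $ds/dt=[\gamma_t,\gamma_{tt}]^{1/3}=\kappa^{1/3}$ already derived above (this makes sense since $\gamma$ has no inflections, so $\kappa\neq 0$) together with the identity $\mu=[\gamma_{ss},\gamma_{sss}]$. The one trick that shortens the computation is to avoid an honest evaluation of $\gamma_{sss}$ and instead exploit $\gamma_{sss}=-\mu\,\gamma_s$, which is forced by differentiating $[\gamma_s,\gamma_{ss}]=1$.

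First I would record the first two $s$-derivatives. Setting $\sigma:=ds/dt=\kappa^{1/3}$, the chain rule gives $\gamma_s=\sigma^{-1}\gamma_t$, and differentiating once more,
\begin{equation*}
\gamma_{ss}=\sigma^{-1}\frac{d}{dt}\bigl(\sigma^{-1}\gamma_t\bigr)=\sigma^{-2}\gamma_{tt}-\sigma^{-3}\sigma_t\,\gamma_t=a\,\gamma_{tt}+b\,\gamma_t,\qquad a:=\kappa^{-\frac23},\quad b:=-\tfrac13\kappa_t\kappa^{-\frac53},
\end{equation*}
which is exactly the affine normal of Proposition \ref{Prop:Affine Normal}. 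Differentiating this expression and dividing by $\sigma$ yields
\begin{equation*}
\gamma_{sss}=\sigma^{-1}\bigl(a\,\gamma_{ttt}+(a_t+b)\gamma_{tt}+b_t\,\gamma_t\bigr).
\end{equation*}

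Next I would pair $\gamma_{sss}$ with $\gamma_{tt}$ in two ways. On one hand, $\gamma_{sss}=-\mu\,\gamma_s=-\mu\sigma^{-1}\gamma_t$, so $[\gamma_{tt},\gamma_{sss}]=-\mu\sigma^{-1}[\gamma_{tt},\gamma_t]=\mu\sigma^{-1}\kappa=\mu\kappa^{2/3}$, using $[\gamma_t,\gamma_{tt}]=\kappa$. On the other hand, from the displayed formula for $\gamma_{sss}$ the middle term drops out against $\gamma_{tt}$, leaving $[\gamma_{tt},\gamma_{sss}]=\sigma^{-1}\bigl(a[\gamma_{tt},\gamma_{ttt}]-b_t\kappa\bigr)$. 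Equating the two gives $\mu=\kappa^{-1}a\,[\gamma_{tt},\gamma_{ttt}]-b_t$; substituting $a=\kappa^{-2/3}$ and $b_t=-\tfrac13\kappa_{tt}\kappa^{-5/3}+\tfrac59\kappa_t^2\kappa^{-8/3}$ and factoring out $\tfrac19\kappa^{-8/3}$ then produces \eqref{eq00}.

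There is no conceptual obstacle; the only thing to watch is the bookkeeping of the fractional powers $\kappa^{-5/3},\kappa^{-8/3}$ and the differentiation of $b$ — in particular verifying that the coefficient of $\kappa_t^2$ comes out to be exactly $-5/9$. One can instead expand the determinant $[\gamma_{ss},\gamma_{sss}]$ directly, which collapses to the three brackets $[\gamma_t,\gamma_{tt}]=\kappa$, $[\gamma_t,\gamma_{ttt}]=\kappa_t$ (obtained by differentiating the first), and $[\gamma_{tt},\gamma_{ttt}]$; this works equally well but is longer and also forces one to compute $a_t$.
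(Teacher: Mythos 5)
Your argument is correct and follows essentially the same route as the paper: both proofs convert $s$-derivatives to $t$-derivatives via $s_t=\kappa^{1/3}$ and extract $\mu$ from the third derivative, and your computations of $\gamma_{ss}$, $\gamma_{sss}$, $b_t$ and the final factoring all check out. The only difference is a minor computational shortcut --- pairing $\gamma_{sss}=-\mu\,\gamma_s$ with $\gamma_{tt}$ so that $a_t$ never needs to be computed, rather than expanding $[\gamma_{ss},\gamma_{sss}]$ directly as the paper's (very terse) proof does --- which you yourself note is an equivalent alternative.
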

\begin{proof}
	Note that $s_t=\kappa^{\frac{1}{3}}$ and also
	$\gamma_s=\gamma_t\kappa^{-\frac{1}{3}}$. Now, by calculating
	$\gamma_{ss}$, $\gamma_{sss}$ and using the fact that
	$\kappa_t=[\gamma_t,\gamma_{ttt}]$, one can conclude that
	$\mu=[\gamma_{ss},\gamma_{sss}].$
\end{proof}
Consider the plane curve $\gamma$ in the Monge form parametrization without Euclidean inflection at the origin, that is,

$$\gamma(t)=\left(t, \displaystyle\frac{1}{2}a_2t^2+ \cdots +\dfrac{1}{k!}a_kt^k+g(t)t^{k+1}
\right),$$ with $a_i \in \mathbb{R}$, $a_2\neq0$ and $g$ a smooth function. Using the Proposition \ref{Prop:Affine Curvature}, the affine curvature of $\gamma$ at the origin is
$$\mu(0)=\dfrac{3a_2a_4-5a_3^2}{9a_2^{\frac{8}{3}}}.$$

This means that the affine curvature depends on $4$-jet of the curve $\gamma$ (see \cite{giblin4} for further information on jet space).
Throughout this paper we deal with the envelope of a family of lines defined on the curve $\gamma$. To be more precise, we have the following definition.

\begin{definition} \label{env}
	The envelope, or discriminant, of a family $$F: \mathbb{R}^n \times \mathbb{R}^r \rightarrow \mathbb{R} $$ of $n$ parameters, is the set: $$\mathcal{D}_F = \left\{x \in \mathbb{R}^r  \, | \, \exists \, u \in \mathbb{R}^n, F(u,x)=\frac{\partial F}{\partial u_i}(u, x)=0, i=1, \dots, n  \right\}.$$
\end{definition}
Consider a smooth plane curve $\gamma$. Take the points $p_1, p_2 \in \gamma$ and let $\gamma_1(t)$ and $\gamma_2(s)$ be the local parameterizations around $p_1$ and $p_2$ with $l_1$ and $l_2$ as their tangent lines of $\gamma$ at $p_1$ and $p_2$, respectively. If $l_1$ and $l_2$ are concurrent then consider $R$ as their intersection point. For $0<\alpha<1$, let $M_\alpha$ be the intermediate point of the segment joining $p_1$ and $p_2$, i.e., $M_\alpha=(1-\alpha)\gamma_1(t) +\alpha\gamma_2(s)$. 
\begin{definition}
	For a given oval smooth closed plane curve $\gamma$, the \textbf{intermediate line} of $\gamma$ is:
	\begin{enumerate}
		\item[$i.$] the line that passes through $R$ and $M_{\alpha}$ when $l_1$ and $l_2$ are concurrent with $t \neq s$;
		\item[$ii.$] the only line that passes through the point $M_{\alpha}$ and is parallel to the parallel tangent lines $l_1$ and $l_2$ with $t \neq s$;
		\item[$iii.$] the tangent line of $\gamma$ at $p_1$, when $t=s$. In the case of $\alpha=1/2$ (i.e. the mid-point), the intermediate line is the affine normal of $\gamma$ at $p_1$.
	\end{enumerate}
\end{definition}
We will use the co-normal maps to define the equation of the intermediate line . 
\begin{definition}
	Let $\gamma$ be a smooth curve. For $p\in\gamma$, let $\nu=\nu(p)$ the linear functional, such that $$\nu(p)(\xi)=1 \ \ \textrm{and} \ \ \nu(p)(X)=0,$$ where $\xi$ is the affine normal of $\gamma$ and $X$ is a tangent vector of $\gamma.$ The dif\-fe\-ren\-ti\-a\-ble map $\nu$ is called the {\it co-normal map.}
\end{definition}
For further details about the properties of co-normal map, see for instance \cite{Nomizu}.
%

Let $\nu_i,$ with  $i=1, 2,$ be the co-normal maps at $p_i,$ i.e., $\nu_i=\nu(p_i)$ with $$\nu_i(\xi_i)=1 \mbox{ \space  and \space  } \nu_i(\gamma_i')=0,$$
where $\xi_i$ is the affine normal at the point $p_i$.
\begin{lemma}\label{eri}
	The equation of the intermediate line is given by
	$$(1-\alpha)\nu_2(C)\nu_1(X-M_\alpha)+\alpha\nu_1(C)\nu_2(X-M_\alpha)=0,$$ where $\alpha \in (0,1)$ and $C=p_2-p_1$ is the chord joining the points $p_1$ and $p_2$.
\end{lemma}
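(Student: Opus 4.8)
The plan is to derive the line through the intersection point $R$ of the two tangents and the intermediate point $M_\alpha$ by writing down an affine-invariant linear equation that vanishes at both points, and then to identify that equation with the stated expression using the defining properties of the co-normal maps $\nu_1,\nu_2$. First I would recall that $\nu_i$ is the linear functional annihilating the tangent direction $\gamma_i'$ at $p_i$ and normalized by $\nu_i(\xi_i)=1$; in particular, for any point $X$, the equation $\nu_i(X-p_i)=0$ is exactly the tangent line $l_i$ at $p_i$ (since $X-p_i$ lies along $\gamma_i'$ precisely when $X\in l_i$, and $\nu_i$ kills only that direction). So the two linear forms $L_i(X):=\nu_i(X-p_i)$ cut out $l_1$ and $l_2$ respectively, and any line through $R=l_1\cap l_2$ is given by a linear combination $\lambda_1 L_1(X)+\lambda_2 L_2(X)=0$ for suitable constants $\lambda_1,\lambda_2$ (not both zero). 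It remains only to choose $\lambda_1,\lambda_2$ so that the line also passes through $M_\alpha=(1-\alpha)p_1+\alpha p_2$.

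The key computation is to evaluate $L_1$ and $L_2$ at $M_\alpha$. I would write $M_\alpha-p_1=\alpha(p_2-p_1)=\alpha C$ and $M_\alpha-p_2=-(1-\alpha)(p_2-p_1)=-(1-\alpha)C$, so that $L_1(M_\alpha)=\alpha\,\nu_1(C)$ and $L_2(M_\alpha)=-(1-\alpha)\,\nu_2(C)$. For the combined form to vanish at $M_\alpha$ we then need $\lambda_1\,\alpha\,\nu_1(C)-\lambda_2\,(1-\alpha)\,\nu_2(C)=0$, which is solved (up to scale) by $\lambda_1=(1-\alpha)\nu_2(C)$ and $\lambda_2=\alpha\,\nu_1(C)$. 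Substituting back, the intermediate line is
$$(1-\alpha)\nu_2(C)\,\nu_1(X-p_1)+\alpha\,\nu_1(C)\,\nu_2(X-p_2)=0.$$
To match the statement of the lemma, which uses $\nu_i(X-M_\alpha)$ rather than $\nu_i(X-p_i)$, I would rewrite $\nu_1(X-p_1)=\nu_1(X-M_\alpha)+\nu_1(M_\alpha-p_1)=\nu_1(X-M_\alpha)+\alpha\nu_1(C)$ and similarly $\nu_2(X-p_2)=\nu_2(X-M_\alpha)-(1-\alpha)\nu_2(C)$; plugging these in, the two ``extra'' terms are $(1-\alpha)\alpha\,\nu_1(C)\nu_2(C)$ and $-\alpha(1-\alpha)\nu_1(C)\nu_2(C)$, which cancel, leaving exactly
$$(1-\alpha)\nu_2(C)\,\nu_1(X-M_\alpha)+\alpha\,\nu_1(C)\,\nu_2(X-M_\alpha)=0,$$
as claimed.

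I would also check the degenerate configurations so that the formula is genuinely the equation of the line in all relevant cases: when $l_1$ and $l_2$ are parallel but $t\neq s$, the point $R$ is at infinity, but the linear form above is still well-defined and nonzero, and one verifies it defines the line through $M_\alpha$ parallel to the common tangent direction (here $\nu_1$ and $\nu_2$ are proportional as functionals since the tangent directions agree, so the combined form has that direction in its kernel). One should note the coefficients $(1-\alpha)\nu_2(C)$ and $\alpha\nu_1(C)$ are not both zero for an oval, since $C$ is a genuine chord and $\nu_i(C)=0$ would force the chord to be tangent at $p_i$, impossible for $t\neq s$ on a convex curve; hence the equation really does describe a line.

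The only mildly delicate point — and the one I would treat as the main obstacle — is keeping the bookkeeping of the translation from base points $p_i$ to the common point $M_\alpha$ straight, since the cancellation of the cross terms $(1-\alpha)\alpha\,\nu_1(C)\nu_2(C)$ is what makes the symmetric-looking final formula correct; everything else is a direct consequence of the definition of the co-normal map and the elementary fact that a pencil of lines through a point is spanned by any two of its members. No singularity theory is needed here; this lemma is purely a setup computation.
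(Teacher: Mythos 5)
Your proof is correct, but it is organized differently from the paper's. The paper works with the direction vector of the intermediate line: it writes the annihilating functional as $\nu=a\,\nu_1+b\,\nu_2$ and determines $a,b$ from the single condition $\nu(R-M_\alpha)=0$, expanded via $R-M_\alpha=(1-\alpha)C+R-p_2$ and $R-p_2=R-p_1-C$ together with the tangency relations $\nu_1(R-p_1)=\nu_2(R-p_2)=0$; the line is then $\nu(X-M_\alpha)=0$, already anchored at $M_\alpha$, so no base-point change is needed. You instead use the pencil of lines through $R$ spanned by the affine forms $\nu_i(X-p_i)$ cutting out the two tangents, fix the coefficients by requiring passage through $M_\alpha$, and then translate the base points from $p_i$ to $M_\alpha$, observing that the cross terms $\pm\alpha(1-\alpha)\nu_1(C)\nu_2(C)$ cancel. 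Both arguments reduce to the same single linear condition on $(a,b)$ (and produce the same solution up to scale), so the substance is equivalent; what your route buys is that it makes explicit two points the paper leaves implicit, namely that the coefficients $(1-\alpha)\nu_2(C)$ and $\alpha\nu_1(C)$ cannot both vanish for a chord of an oval, and that the formula remains a valid equation of the intermediate line in the parallel-tangent case where $R$ is at infinity and the paper's pointwise computation at $R$ does not literally apply. The cost is the extra bookkeeping of the base-point translation, which the paper's choice of writing everything relative to $M_\alpha$ from the start avoids.
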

\begin{proof}
	Let $\nu$ be the functional that annul the director vector of the intermediate line. Then:  $$\nu=a\,\nu_1 + b\,\nu_2.$$
	Notice that:
	\begin{eqnarray*}
	\nu(R-M_\alpha)=0 & \Longleftrightarrow & (a\,\nu_1 + b\,\nu_2)((1-\alpha)C + R - p_2)=0  \\
	&  & a\nu_1((1-\alpha)C)+a\nu_1(R-p_2)+b\nu_2((1-\alpha)C)=0.	
	\end{eqnarray*}
	
	In fact, $\nu_2(R-p_2)=0$, since $R-p_2$ is tangent to $\gamma_2$. As $R-p_2 = R-p_1 -C$, the last equation is equivalent to: 
	
	\begin{align*}
	 a\nu_1((1-\alpha)C) +a\nu_1(R-p_1) -a\nu_1(C)+b\nu_2((1-\alpha)C) &=0\\ 
	 -a\nu_1(\alpha C) + b\nu_2((1-\alpha)C)&=0,  
	\end{align*}
	because $R-p_1$ is tangent to $\gamma_1$. 
	
	Therefore, taking $a=(1-\alpha)\nu_2(C)$ and $b=\alpha\nu_1(C)$, we obtain the result.
	\end{proof}

Consider the family $F:\gamma_1 \times \gamma_2 \times \mathbb{R}^2  \longrightarrow \mathbb{R}$ given by: 
\begin{equation}\label{intermediate}
F(s,t,X)=(1-\alpha)\nu_2(C)\nu_1(X-M_\alpha)+\alpha\nu_1(C)\nu_2(X-M_\alpha),
\end{equation}
where $\alpha \in (0,1).$
For fixed points $p_1$ and $p_2$, and using the Lemma \ref{eri}, one can conclude that the equation of the intermediate line is $F(s,t,X)=0.$

According to the Definition \ref{env}, we intend to find the discriminant set $\mathcal{D}_F$ of the family $F$. More precisely, 
\begin{align}\label{discriminant}
\mathcal{D}_F=\left\{X\in\mathbb{R}^2 \, | \, \exists \, (s,t); F=F_s=F_t=0 \, \textrm{at} \, (X,s,t)\right\}.
\end{align} 
\section{Intermediate lines with non-parallel tangents}\label{sec:AEIL}

We begin with the following simple lemma:

\begin{lemma}\label{deco}
	The derivatives of co-normal maps $\nu_1$ and $\nu_2$ are given by:
	$$\nu_1^{\prime}=a\nu_1 + b\nu_2 \ \ \textrm{and} \ \ \nu_2^{\prime}=\bar{a}\nu_1 + \bar{b}\nu_2, $$
	where $a=\dfrac{\nu_1^{\prime}(\gamma_2^{\prime})}{\nu_1(\gamma_2^{\prime})}, b=\dfrac{\nu_1^{\prime}(\gamma_1^{\prime})}{\nu_2(\gamma_1^{\prime})}, \bar{a}=\dfrac{\nu_2^{\prime}(\gamma_2^{\prime})}{\nu_1(\gamma_2^{\prime})}$ and $\bar{b}=\dfrac{\nu_2^{\prime}(\gamma_1^{\prime})}{\nu_2(\gamma_1^{\prime})}.$
\end{lemma}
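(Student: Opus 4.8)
The plan is to treat $\nu_1$ and $\nu_2$ as elements of the dual space $(\mathbb{R}^2)^*$ and to exploit the fact that, when the tangent lines $l_1$ and $l_2$ are not parallel, the pair $\{\nu_1,\nu_2\}$ is a basis of this two-dimensional space. Indeed, by the definition of the co-normal map, $\ker\nu_i$ is spanned by $\gamma_i'$, so $\ker\nu_1\neq\ker\nu_2$ precisely when $\gamma_1'$ and $\gamma_2'$ are not parallel, i.e. when $l_1\not\parallel l_2$. Consequently, for each such pair of points there exist (unique) scalars $a,b$ and $\bar a,\bar b$ with $\nu_1'=a\nu_1+b\nu_2$ and $\nu_2'=\bar a\nu_1+\bar b\nu_2$, where $'$ denotes $\partial/\partial t$ on the first factor and $\partial/\partial s$ on the second; note that these coefficients are in general functions of the pair $(t,s)$, since $\nu_2$ (resp. $\nu_1$) varies with $s$ (resp. $t$).

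To identify the coefficients I would simply evaluate these two identities on the tangent vectors $\gamma_1'$ and $\gamma_2'$. From the very definition of the co-normal maps, $\nu_1(\gamma_1')=0$ and $\nu_2(\gamma_2')=0$; moreover the non-parallelism hypothesis gives $\nu_1(\gamma_2')\neq 0$ and $\nu_2(\gamma_1')\neq 0$, so that the quotients appearing in the statement are well defined. Applying $\nu_1'=a\nu_1+b\nu_2$ to $\gamma_2'$ annihilates the $b$-term and yields $a=\nu_1'(\gamma_2')/\nu_1(\gamma_2')$, while applying it to $\gamma_1'$ annihilates the $a$-term and yields $b=\nu_1'(\gamma_1')/\nu_2(\gamma_1')$. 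The same evaluations of $\nu_2'=\bar a\nu_1+\bar b\nu_2$ on $\gamma_2'$ and $\gamma_1'$ give $\bar a=\nu_2'(\gamma_2')/\nu_1(\gamma_2')$ and $\bar b=\nu_2'(\gamma_1')/\nu_2(\gamma_1')$, which is exactly the claimed formula.

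There is essentially no analytic obstacle here: the content of the lemma is the change-of-basis computation just described, and the only points requiring care are bookkeeping ones — verifying that $\{\nu_1,\nu_2\}$ really is a basis (this is where the non-parallel-tangents hypothesis enters and guarantees the denominators are nonzero) and keeping track of which variable each prime refers to. As an optional sanity check one may differentiate the identity $\nu_i(\gamma_i')\equiv 0$; when the $\gamma_i$ are parametrized by affine arc length this gives $\nu_i'(\gamma_i')=-\nu_i(\xi_i)=-1$, consistent with the formulas above, but this observation is not needed for the proof of the statement.
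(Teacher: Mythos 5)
Your argument is correct and is essentially the paper's own proof: both take $\{\nu_1,\nu_2\}$ as a basis of the dual plane (valid precisely because the tangent lines are non-parallel), expand $\nu_1'$ and $\nu_2'$ in that basis, and evaluate on $\gamma_1'$ and $\gamma_2'$ to isolate each coefficient using $\nu_i(\gamma_i')=0$. Your added remarks on why the denominators are nonzero and the sanity check via $\nu_i(\gamma_i')\equiv 0$ are welcome but not a departure from the paper's route.
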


\begin{proof} Take the basis $\left\{\nu_1,\nu_2\right\}$ of the dual plane $\mathbb{R}_2.$ Thus, we can write the linear functional $\nu_1^{\prime}$ as a linear combination of the basis vectors, i.e., $\nu'_1=a\nu_1+b\nu_2.$ By applying $\nu'_1$ to $\gamma'_1$ we obtain $\nu'_1(\gamma'_1)= b\nu_2(\gamma'_1),$ which leads us to achieve 
\[b=\dfrac{\nu_1^{\prime}(\gamma_1^{\prime})}{\nu_2(\gamma_1^{\prime})}.\]
The other coefficients are obtained analogously.
\end{proof}

The next result allows us to locate pairs of points of $\gamma$ that contribute to the envelope of intermediate lines, through which we can relate the parameters $s$ and $t$.

\begin{theorem}\label{pre}
	The necessary and sufficient condition for which the pair $(s, t)$ determines one point of the envelope of intermediate lines is
	
	\begin{eqnarray}\label{eq1}
	\nu_1(C)=-\lambda\nu_2(C),
	\end{eqnarray}
	 where
	$$\lambda=\left(\dfrac{1-\alpha}{\alpha}\right)^{\frac{1}{3}}.$$
\end{theorem}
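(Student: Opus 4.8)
The plan is to compute the discriminant conditions $F = F_s = F_t = 0$ directly from the expression \eqref{intermediate} and extract the relation between $s$ and $t$ that forces a given pair $(p_1, p_2)$ to contribute to the envelope. First I would evaluate the partial derivatives $F_s$ and $F_t$ of
$$F(s,t,X) = (1-\alpha)\nu_2(C)\nu_1(X - M_\alpha) + \alpha\nu_1(C)\nu_2(X - M_\alpha)$$
at a point $X$ lying on the intermediate line (so $F = 0$ already holds), using the differentiation rules for the co-normal maps from Lemma \ref{deco}, namely $\nu_1' = a\nu_1 + b\nu_2$ and $\nu_2' = \bar a\nu_1 + \bar b\nu_2$, together with $C_s = \gamma_2'$, $C_t = -\gamma_1'$, and the corresponding derivatives of $M_\alpha = (1-\alpha)\gamma_1 + \alpha\gamma_2$. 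The key algebraic simplifications come from the defining identities $\nu_1(\gamma_1') = 0$ and $\nu_2(\gamma_2') = 0$, which kill many terms, so that $F_s$ and $F_t$ reduce to manageable combinations of $\nu_1(C)$, $\nu_2(C)$, $\nu_1(X - M_\alpha)$, $\nu_2(X - M_\alpha)$ and the structure coefficients.

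Next I would impose $F = 0$ to eliminate one of the two quantities $\nu_1(X - M_\alpha)$, $\nu_2(X - M_\alpha)$ in favor of the other — say express $\nu_1(X - M_\alpha)$ in terms of $\nu_2(X - M_\alpha)$ (assuming $\nu_2(C) \neq 0$, the non-degenerate generic situation). Substituting this into the equations $F_s = 0$ and $F_t = 0$ should produce, after clearing common factors, two scalar equations in the data $\nu_1(C)$, $\nu_2(C)$ and $\alpha$. Combining them — typically by taking a ratio so the remaining geometric quantity cancels — ought to yield a single relation of the form $\nu_1(C)^3 / \nu_2(C)^3 = -(1-\alpha)/\alpha$, i.e. exactly $\nu_1(C) = -\lambda\nu_2(C)$ with $\lambda = ((1-\alpha)/\alpha)^{1/3}$. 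The cube is expected to appear naturally: the weights $(1-\alpha)$ and $\alpha$ each enter $F$ linearly, but differentiating $M_\alpha$ and $C$ and then combining $F_s$ with $F_t$ introduces two further factors, giving the third power. I would then check the degenerate cases ($\nu_1(C) = 0$ or $\nu_2(C) = 0$, corresponding to the chord being affine-tangent at one endpoint) separately and argue they are consistent with, or excluded from, the stated condition.

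The main obstacle I anticipate is organizing the differentiation cleanly: the product structure of $F$ means $F_s$ and $F_t$ each have several terms, and without a disciplined use of the relations from Lemma \ref{deco} and the annihilation identities the computation balloons. The trick will be to work in the dual basis $\{\nu_1, \nu_2\}$ throughout — expressing everything, including $X - M_\alpha$ paired against $\nu_i$, in these coordinates — so that the cancellations are visible symbolically rather than hidden in coordinate expressions. A secondary subtlety is justifying that the real cube root is the correct branch and that $\lambda > 0$, which is immediate since $\alpha \in (0,1)$, but worth noting so the relation $\nu_1(C) = -\lambda\nu_2(C)$ is unambiguous.
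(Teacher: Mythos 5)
Your plan is essentially identical to the paper's own proof: the authors likewise compute $F_s$ and $F_t$ using Lemma \ref{deco} and the annihilation identities, use $F=0$ to eliminate $\nu_2(X-M_\alpha)$, obtain two expressions for $\nu_1(X-M_\alpha)$ from $F_t=0$ and $F_s=0$, and equate them, with the formulas for $b$ and $\bar a$ producing the cubic relation $\alpha\,\nu_1(C)^3=-(1-\alpha)\,\nu_2(C)^3$. The approach is correct and matches the paper.
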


\begin{proof} Since $\nu_1(\gamma'_1(t))=\nu_2(\gamma'_2(s))=0$, so the derivative of the Equation \eqref{intermediate} with respect to $t$ is 	
\begin{align*}
	F_t= & (1-\alpha)\nu_2(-\gamma'_1)\nu_1(X-M_\alpha)+(1-\alpha)\nu_2(C)(\nu'_1(X-M_\alpha))+ \\  & +\alpha\nu'_1(C)\nu_2(X-M_\alpha)-\alpha \nu_1(C)\nu_2((1-\alpha)\gamma'_1).
\end{align*} 
Using Lemma \ref{deco}, we obtain:
	\begin{align}\label{eq2}
F_t = & -(1-\alpha)\nu_2(\gamma'_1)\nu_1(X-M_\alpha)+b\nu_2(C)\nu_2(X-M_\alpha) \nonumber \\  &-\alpha(1-\alpha)\nu_1(C)\nu_2(\gamma'_1)+aF.
	\end{align}
Similarly,
	\begin{align}\label{eq3}
	F_s = & \ \overline{a}\nu_1(C)\nu_1(X-M_\alpha)+\alpha\nu_1(\gamma'_2)\nu_2(X-M_\alpha) \nonumber \\ &- \alpha(1-\alpha)\nu_2(C)\nu_1(\gamma'_2)+\overline{b}F,
	\end{align}
By isolating $\nu_2(X-M_\alpha)$ from $F=0$ and replacing it in the equations $F_t=0$ and $F_s=0$, we obtain
\begin{eqnarray}\label{eq333}
	\nu_1(X-M_\alpha)=\dfrac{-\alpha^2\nu^2_1(C)\nu_2(\gamma'_1)     }{\alpha\nu_1(C)\nu_2(\gamma'_1) +b\nu^2_2(C)},
\end{eqnarray}
\begin{eqnarray}\label{eq444}
	\nu_1(X-M_\alpha)=\dfrac{\alpha(1-\alpha)\nu_1(C)\nu_2(C)\nu_1(\gamma'_2) }{\overline{a}\nu^2_1(C)-(1-\alpha)\nu_2(C)\nu_1(\gamma'_2)}.
\end{eqnarray}
For the rest of the proof, we	
 equate the equations \eqref{eq333} and \eqref{eq444}, and use the formulae of $b$ and $\bar{a}$ given in the Lemma \eqref{deco}.
	%
	%
	%
	%
	%
	%
	%
\end{proof}
\begin{remark}
	If $\gamma_1$ and $\gamma_2$  are parametrized by the affine arc length parametrization, then $\lambda=\beta.$ Thus, the condition given in the Theorem \ref{pre} is equivalent to 
$$[\gamma_1^{\prime}+\lambda\gamma_2^{\prime}, \gamma_2-\gamma_1]=0,$$ 
where $\lambda=\left(\dfrac{1-\alpha}{\alpha}\right)^\frac{1}{3}.$ 
\end{remark}
\begin{theorem}\label{Theo:Solution of Discr} 
Let $F$ be the family of the intermediate lines given in \eqref{intermediate}. The discriminant of the family $F$ admits a solution if and only the equation \eqref{pre} holds. Moreover, the solution of the system is given by
	\begin{equation}
	\displaystyle X-M_\alpha= \frac{\alpha\nu_1(C)}{\alpha\nu_2(\gamma_1')\nu_1(C)+ b\nu_2^2(C)  }\left(  (1-\alpha)\nu_2(C)\gamma_1'- \frac{\alpha\nu_1(C)\nu_2(\gamma_1')}{\nu_1(\gamma_2')}\gamma_2'       \right),
	\end{equation}
	where $\alpha \in (0,1).$
\end{theorem}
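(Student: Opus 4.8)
The plan is to read the solution off the reduction already performed in the proof of Theorem~\ref{pre}, and then to simplify. Recall from \eqref{discriminant} that $\mathcal{D}_F$ is cut out by the system $F=F_s=F_t=0$ at $(X,s,t)$. Since $\gamma$ is an oval the chord $C=p_2-p_1$ is never tangent to $\gamma$ at $p_1$ or $p_2$, so $\nu_1(C)\neq 0$ and $\nu_2(C)\neq 0$; and since the tangent lines at $p_1,p_2$ are assumed non-parallel, $\{\gamma_1',\gamma_2'\}$ is a basis of $\mathbb{R}^2$ while $\{\nu_1,\nu_2\}$ is the essentially dual basis of $\mathbb{R}_2$, with $\nu_1(\gamma_1')=\nu_2(\gamma_2')=0$ and $\nu_1(\gamma_2'),\nu_2(\gamma_1')\neq 0$. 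Consequently a vector $v\in\mathbb{R}^2$ is determined by the pair $(\nu_1(v),\nu_2(v))$, and conversely any prescribed pair determines a unique $v$.

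First I would isolate $\nu_2(X-M_\alpha)$ from $F=0$ in \eqref{intermediate}, obtaining $\nu_2(X-M_\alpha)=-\frac{(1-\alpha)\nu_2(C)}{\alpha\nu_1(C)}\,\nu_1(X-M_\alpha)$ (legitimate since $\alpha\nu_1(C)\neq 0$). Substituting this into $F_t=0$ as written in \eqref{eq2} (the term $aF$ drops since $F=0$) produces \eqref{eq333}, and substituting into $F_s=0$ as written in \eqref{eq3} produces \eqref{eq444}; these are exactly the two expressions for $\nu_1(X-M_\alpha)$ from the proof of Theorem~\ref{pre}. Hence the full system is solvable in $X$ iff \eqref{eq333} and \eqref{eq444} agree, and the calculation in Theorem~\ref{pre} shows this happens precisely when the relation \eqref{eq1} holds. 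Conversely, when \eqref{eq1} holds, the common value of $\nu_1(X-M_\alpha)$ from \eqref{eq333} together with $\nu_2(X-M_\alpha)$ from $F=0$ determines $X-M_\alpha$ uniquely; since every substitution step above is reversible on the locus $F=0$, this $X$ indeed satisfies $F_s=F_t=0$. This settles the equivalence part of the statement.

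To get the closed form, I would write $X-M_\alpha=A\gamma_1'+B\gamma_2'$ and apply $\nu_1,\nu_2$: from $\nu_1(X-M_\alpha)=B\,\nu_1(\gamma_2')$ and $\nu_2(X-M_\alpha)=A\,\nu_2(\gamma_1')$ one reads $B=\nu_1(X-M_\alpha)/\nu_1(\gamma_2')$ and $A=\nu_2(X-M_\alpha)/\nu_2(\gamma_1')$. Then I substitute the value of $\nu_1(X-M_\alpha)$ from \eqref{eq333} and the relation $\nu_2(X-M_\alpha)=-\frac{(1-\alpha)\nu_2(C)}{\alpha\nu_1(C)}\nu_1(X-M_\alpha)$ into $A$ and $B$, and factor the common denominator $\alpha\nu_2(\gamma_1')\nu_1(C)+b\,\nu_2^2(C)$ out of both coefficients; the remaining bookkeeping collapses $A\gamma_1'+B\gamma_2'$ into the displayed expression.

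The only genuine obstacle is the algebra in this last step — tracking the factors $\nu_1(C),\nu_2(C),\nu_1(\gamma_2'),\nu_2(\gamma_1')$ and the coefficient $b$ through the cancellations without error — and recording the non-degeneracy hypotheses (non-parallel tangents, $\nu_1(C)\neq0$, and $\alpha\nu_2(\gamma_1')\nu_1(C)+b\,\nu_2^2(C)\neq0$) under which the formula is meaningful. Everything else is a formal consequence of Theorem~\ref{pre} and the duality between $\{\gamma_1',\gamma_2'\}$ and $\{\nu_1,\nu_2\}$.
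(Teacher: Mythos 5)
Your proposal is correct and follows essentially the same route as the paper: isolate $\nu_2(X-M_\alpha)$ from $F=0$, use \eqref{eq333} and \eqref{eq444} (whose agreement is exactly condition \eqref{eq1} from Theorem \ref{pre}) for the solvability equivalence, and then decompose $X-M_\alpha=\overline{A}\gamma_1'+\overline{B}\gamma_2'$ with the coefficients read off via the near-duality of $\{\nu_1,\nu_2\}$ and $\{\gamma_1',\gamma_2'\}$. Your computed $A$ and $B$ agree with the paper's $\overline{A}$ and $\overline{B}$, and you are in fact slightly more careful than the paper in making the ``if and only if'' direction and the nondegeneracy hypotheses explicit.
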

\begin{proof}
	Since  $\gamma'_1$ and $\gamma'_2$ are not parallels, we can write $X-M_\alpha= \overline{A}\gamma'_1 + \overline{B}\gamma'_2.$ Thus, by applying  $\nu_2$ the latter equation and isolating $\nu_2(X-M_\alpha)$ in the equation of intermediate line, we have:
	
	$$\overline{A}=\dfrac{\nu_2(X-M_\alpha)}{\nu_2(\gamma'_1)}=\dfrac{-(1-\alpha)\nu_2(C)\nu_1(X-M_\alpha)}{\alpha\nu_2(\gamma'_1)\nu_1(C)} $$ 
	
	Using the expression  \eqref{eq333} :
	\begin{align*}
	\overline{A} = & \dfrac{(1-\alpha)\nu_2(C)}{\alpha\nu_2(\gamma'_1)\nu_1(C)}\left( \dfrac{\alpha^2\nu^2_1(C)\nu_2(\gamma'_1)}{\alpha\nu_1(C)\nu_2(\gamma_1')+b\nu_2^2(C)}\right)\\
	= & \dfrac{\alpha(1-\alpha)\nu_1(C)\nu_2(C)}{\alpha\nu_1(C)\nu_2(\gamma'_1) + b\nu_2^2(C)}.
	\end{align*}
	Similarly,
	$$\overline{B}=\dfrac{-\alpha^2\nu_1^2(C)\nu_2(\gamma'_1)}{\nu_1(\gamma'_2)(\alpha\nu_2(\gamma'_1)\nu_1(C)+b\nu^2_2(C))}.$$
\end{proof}
\begin{remark}
	\
	\begin{enumerate}
		\item  If the curve $\gamma$ is parameterized by the affine arc length, the envelope of the intermediate lines is given by
	    \begin{align*}
		 X - M_\alpha & = \frac{ \alpha\lambda[\gamma_2^{\prime}, C ] [\gamma_2^{\prime}, \gamma_1^{\prime}]      }{     \alpha\lambda[\gamma_2^{\prime}, \gamma_1^{\prime}]^2 + [\gamma_2^{\prime}, C]}\left( (1-\alpha)\gamma_1^{\prime} -\alpha\lambda\gamma_2^{\prime}\right).
		\end{align*} 
		\item Notice that when $\alpha=1/2$, we have the familiar set Affine Envelope Symmetry Set. More precisely,
		
		$$ X - M = \frac{1}{2}\left( \frac{ [\gamma_2^\prime,\gamma_1-\gamma_2] [\gamma_1^\prime,\gamma_2^{\prime}] }{     -2[\gamma_2^\prime,\gamma_1-\gamma_2] + [\gamma_1^\prime,\gamma_2^{\prime}]^2}\right) (\gamma_1^{\prime} -\gamma_2^{\prime}),$$  
		for further details about this set, see \cite{giblin}.
	\end{enumerate}
\end{remark}
\section{Local structure of the envelope of Intermediate lines}\label{sec:local} 
Let $\gamma$ be a $C^{\infty}$ closed plane curve. Also let $p_1=\gamma(t)$ and $p_2=\gamma(s)$ be two different points of $\gamma$ with $\kappa(p_1)\neq0$ and $\kappa^\prime(p_1)\neq0$, where $\kappa$ is the curvature at the point $p_1$. This means that $p_1$ is neither an inflection point nor a vertex point. Without loss of generality, we can assume that 
\begin{equation}\label{P1andP2}
\begin{array}{ccl}
p_1 & = &(t,f(t))=\left(t,\frac{1}{2}t^2+a_3t^3+a_4 t^4+a_5 t^5 + O_6(t)\right),\\
p_2 & =& (s,g(s))=\left(s, b_0+b_1 s+b_2 s^2 +b_3s^3+b_4s^4+b_5 s^5 +O_6(s)\right),
\end{array}
\end{equation}
where $b_0>0$, $a_3>0$, $b_1^2-b_0>0$ and $O_6(t)$ (resp. $O_6(s)$) represents the terms of order grater than or equal to 6 with respect to the parameter $t$ (resp. $s$).
The tangent lines at the points $p_1$ and $p_2$ can be parallel or concurrent.

Let $F(x,y,t,s)=l_1(t,s) x + l_2(t,s) y+l_3(t,s)$ be the equation of the intermediate line passing through the point $M_{\alpha}$ given in (\ref{intermediate}). The condition of existence of solution for the envelope of the family of such lines is that the determinant of the following matrix be zero,
\begin{equation}\label{Matrix}
M=\left[
\begin{array}{ccc}
F_x(0,0,t,s) & F_y(0,0,t,s) & F(0,0,t,s) \\
F_{sx}(0,0,t,s) & F_{sy}(0,0,t,s) & F_s(0,0,t,s) \\
F_{tx}(0,0,t,s) & F_{ty}(0,0,t,s) & F_t(0,0,t,s)
\end{array}\right].
\end{equation}
The determinant of the matrix (\ref{Matrix}) is  
\begin{align}\label{detM} \det(M)= \frac{1}{8} \alpha (\alpha -1) A(t,s) B(t,s),\end{align}
where $A(0,0)=0$ and $B(0,0)=8(2\alpha b_2-\alpha+1)b_0^3$ and we have 
\begin{align}\label{A}
	A(t,s)= t-2b_2 s + 3 a_3 t^2-3b_3 s^2 + O_3(t,s),
\end{align}
where $O_3(t,s)$ denotes terms with respect to the parameters $t$ and $s$ with the order greater than or equal to three.
\subsection{Concurrent tangent lines}
If the tangent lines at the points $p_1$ and $p_2$ are not parallel, then the determinant of the matrix (\ref{Matrix}) given in (\ref{detM}) is zero, if  $B(t,s)=0$ (note that in this case $A(t,s)\neq 0$). Since $B(0,0)=8(2\alpha b_2-\alpha+1)b_0^3$ and $b_0\neq 0$. the expression $B(t,s)$, locally, has zero around the origin, if $b_2=\frac{\alpha-1}{2\alpha}$. Using this assumption, one can write the parameter $t$ as a function of $s$, due to the implicit function theorem. To be more precise, we have
\begin{align}\label{t}
	t = \frac{2\alpha b_0 b_3+b_1(\alpha-1)}{(\alpha-1)(2a_3b_0+b_1)} s +O_2(s).
\end{align}
\begin{remark}
	Note that as the point $p_1$ is not a vertex point, then, without loss of generality, we can assume that $b_0, b_1\,\,\text{and}\,\,a_3>0$. Therefore, $2a_3b_0+b_1 \neq 0$.	
\end{remark}   
\begin{theorem}\label{theo:non_Paralel}
	For a plane curve $\gamma$, suppose that the points $p_1$ and $p_2$ given in (\ref{P1andP2}) have non-parallel tangents. We have: 
	\begin{itemize}
		\item[i)] If neither the point $p_1$ nor $p_2$ are inflection points, the condition of regularity of the envelope of the family of intermediate lines (i.e. the curve $AEIL$) is
		\begin{align*}
			\alpha & \neq \frac{b_0}{b_1^2},1,\\ 
			b_3 & \neq \frac{(\alpha-1)(-6\alpha a_3b_0b_1^2+4\alpha a_3b_0^2-3\alpha b_1^3-2a_3b_0^2)}{2\alpha b_0(6\alpha a_3b_0b_1+3\alpha b_1^2+2\alpha b_0-b_0)}, 
		\end{align*} 
		\item[ii)] If the point $p_1$ is an inflection point, then the curve $AEIL$ is regular at $s=0$ if $b_3\neq 0$, equivalently, if $p_2$ is an inflection point at the origin.
 	\end{itemize} 
\end{theorem}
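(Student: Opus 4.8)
The plan is to push the singularity-theoretic computation that produced \eqref{detM}, \eqref{A}, and \eqref{t} one order further and then read off when the resulting parametrized curve fails to be an immersion. Concretely, the envelope $AEIL$ is obtained by solving the linear system given by the vanishing of $\det(M)$ together with the first two rows of $M$; by Theorem \ref{Theo:Solution of Discr} we already have a closed formula for $X-M_\alpha$ in terms of the co-normal data, so after substituting the Monge-form jets \eqref{P1andP2} and the constraint $b_2=\frac{\alpha-1}{2\alpha}$ (which is what makes $B$ vanish near the origin), the envelope becomes a smooth curve $s\mapsto X(s)$ with $X(0)=M_\alpha(0)$. The point $p_1$ being neither an inflection nor a vertex ($a_2=1\neq 0$, $a_3\neq 0$) guarantees $2a_3b_0+b_1\neq 0$, so \eqref{t} expresses $t$ as a genuine smooth function of $s$ and the whole envelope is a one-parameter curve.

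First I would substitute \eqref{t} into the formula for $X(s)-M_\alpha$ and compute the $2$-jet of $X(s)$ at $s=0$; regularity of $AEIL$ at $s=0$ is exactly the statement that $X'(0)\neq 0$. This is a rational expression in the coefficients $a_3,b_0,b_1,b_3$ and $\alpha$ (the jets $a_4,a_5,b_4,b_5$ do not enter at this order, consistently with the earlier remark that affine curvature sees only the $4$-jet and regularity here is a lower-order phenomenon). Setting $X'(0)=0$ and clearing denominators yields, after factoring, two possibilities: either a factor depending only on $\alpha$ and $b_0,b_1$ vanishes — this gives the condition $\alpha=\frac{b_0}{b_1^2}$ (the case $\alpha=1$ being excluded from the outset since $\alpha\in(0,1)$, but it reappears formally as the degenerate $(1-\alpha)$ factor in \eqref{detM}) — or the remaining factor vanishes, which solved for $b_3$ gives precisely the displayed value
$$b_3=\frac{(\alpha-1)(-6\alpha a_3b_0b_1^2+4\alpha a_3b_0^2-3\alpha b_1^3-2a_3b_0^2)}{2\alpha b_0(6\alpha a_3b_0b_1+3\alpha b_1^2+2\alpha b_0-b_0)}.$$
So part (i) follows by contrapositive: $AEIL$ is regular at $s=0$ precisely when none of these exceptional equalities hold.

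For part (ii), the point $p_1$ being an inflection means $a_2=0$ in \eqref{P1andP2}, so one must redo the local analysis with the curve written as $p_1=(t,a_3t^3+a_4t^4+\cdots)$. I would recompute $\det(M)$, $A$, $B$, and the analogue of \eqref{t} under this degeneration; the condition for $B$ to vanish near the origin and for the tangent-line intersection point $R$ to stay well-defined will force $p_2$ also to be an inflection point, i.e. $b_2=0$, and the relation between $t$ and $s$ will now be governed by the next nonzero coefficients. Expanding $X(s)-M_\alpha$ in this regime, the $2$-jet of the envelope will be proportional to $b_3$, so $X'(0)\neq 0$ if and only if $b_3\neq 0$, which is the condition that $p_2$ is a genuine (ordinary) inflection; this is the claimed statement.

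The main obstacle I expect is bookkeeping rather than conceptual: one must carry enough terms in the $s$- and $t$-expansions (through order two or three) in both the generic and the inflectional settings, keep track of which coefficient constraints ($b_2=\frac{\alpha-1}{2\alpha}$ in the first case, $a_2=b_2=0$ in the second) are being imposed to land on the stratum where the envelope is locally a curve through $M_\alpha(0)$, and then correctly factor a somewhat bulky rational function of five parameters to recognize the two exceptional loci in (i). A secondary subtlety is checking that the denominators appearing (notably $6\alpha a_3b_0b_1+3\alpha b_1^2+2\alpha b_0-b_0$ and $2a_3b_0+b_1$) are nonzero under the standing hypotheses so that the manipulations are legitimate; the non-vertex assumption $\kappa'(p_1)\neq 0$ and the sign conventions $a_3,b_0,b_1>0$ are exactly what is needed here, as noted in the Remark preceding the theorem.
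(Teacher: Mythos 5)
Your proposal follows essentially the same route as the paper: impose $b_2=\frac{\alpha-1}{2\alpha}$ (resp.\ $b_2=0$ in the inflectional case) so that the factor $B$ vanishes, invoke the implicit function theorem to write $t=t(s)$, substitute into the envelope parametrization, and read off regularity from the non-vanishing of the first-order coefficients. The only minor discrepancy is that in the paper the conditions $\alpha\neq b_0/b_1^2$ and $\alpha\neq 1$ arise because these values annihilate the common denominator of the coefficients $\tilde{a}_0,\tilde{a}_1,\tilde{b}_0,\tilde{b}_1$ (so the parametrization degenerates), rather than appearing as factors of the numerator of $X'(0)$ after clearing denominators; this does not affect the conclusion.
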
 
\begin{proof}
	i) Let the points $p_1$ and $p_2$ given in \eqref{P1andP2} not be inflection points. After substituting the parameter $t$, given in \eqref{t}, in the envelope of intermediate lines \eqref{intermediate}, we reach the following parametrization
		\begin{align}\label{F non-parallel}
			F(t,\alpha) = \left(\tilde{a}_0+\tilde{a}_1 s+O(s^2), \tilde{b}_0+\tilde{b}_1 s + O(s^2)\right),
		\end{align}
		where 
		\begin{align*}
			\tilde{a}_0 & = -\frac{\alpha b_1 b_0}{\alpha b_1^2-b_0},\\  
			\tilde{a}_1 & = -\dfrac{b_0 \delta}{(b_1^2\alpha^2-(b_1^2+b_0)\alpha+b_0)(2a_3b_0+b_1)(b_1^2\alpha-b_0)},\\
			\tilde{b}_0 &= -\frac{\alpha  b_0^2}{\alpha b_1^2-b_0},\\
			\tilde{b}_1 & = -\dfrac{b_1 \delta}{(b_1^2\alpha^2-(b_1^2+b_0)\alpha+b_0)(2a_3b_0+b_1)(b_1^2\alpha-b_0)},
		\end{align*}
		and $\delta$ is a constant term. Note that, as we assume that $b_1^2-b_0>0$, the denominator of $\tilde{a}_1$ and $\tilde{b}_1$ are not nulls for any real value of $\alpha\neq b_0/b_1^2$ and $\alpha\neq1$. The envelope \eqref{F non-parallel} is singular at $s=0$ if $\delta=0$. This is observed if:
		\[
		b_3 = \frac{\alpha-1}{2\alpha b_0}\frac{(-6\alpha a_3b_0b_1^2+4\alpha a_3b_0^2-3\alpha b_1^3-2a_3b_0^2)}{(6\alpha a_3b_0b_1+3\alpha b_1^2+2\alpha b_0-b_0)}.
		\]
		This completes the proof of item (i).
		
		ii) Now suppose that the curve $\gamma$ is non-oval and has inflection point at the point $p_1$. Therefore, we can assume that the points $p_1$ and $p_2$ have the following local parametrizations: $p_1=(t,a_3 t^3+O(t^4))$ and $p_2=(s,b_0+b_1 s+b_2 s^2+O(s^3))$, with $b_0b_1\neq 0$. In this case, the component that annuls in determinant of the matrix $M$ given in \eqref{Matrix} becomes	
		$$B(t,s)=16 \alpha b_0^3 b_2+O(t,s).$$
		Hence, $B(0,0)=0$ if and only if $b_2=0$. So, by using the Implicit Function Theorem, we obtain $t$ as function of $s$. To be more precise, we have 
		\[
		t = \dfrac{\alpha b_3}{(\alpha-1)a_3}s+O(s^2).
		\]
		Therefore the parametrization of the envelope \eqref{intermediate} is as follow
		\[
		F(s,\alpha) = \left(-\frac{b_0}{b_1}+\hat{a} s+O(s^2) , \hat{b} s + O(s^2) \right),
		\]
		where 
		\[
		\hat{a} \hat{b} = \dfrac{36\alpha b_3^2b_0^4}{b_1^5(\alpha-1)^2}.
		\]
		This means that $F(s,\alpha)$ is singular at the origin, if $b_3=0$. Equivalently, $F(s,\alpha)$ is regular at the origin, if $p_2$ is an inflection point (a schematic local behavior of $AEIL$ is given in Figure \ref{fig:InfNP}).   
\end{proof}
\begin{figure}[h]
	\includegraphics[scale=0.4]{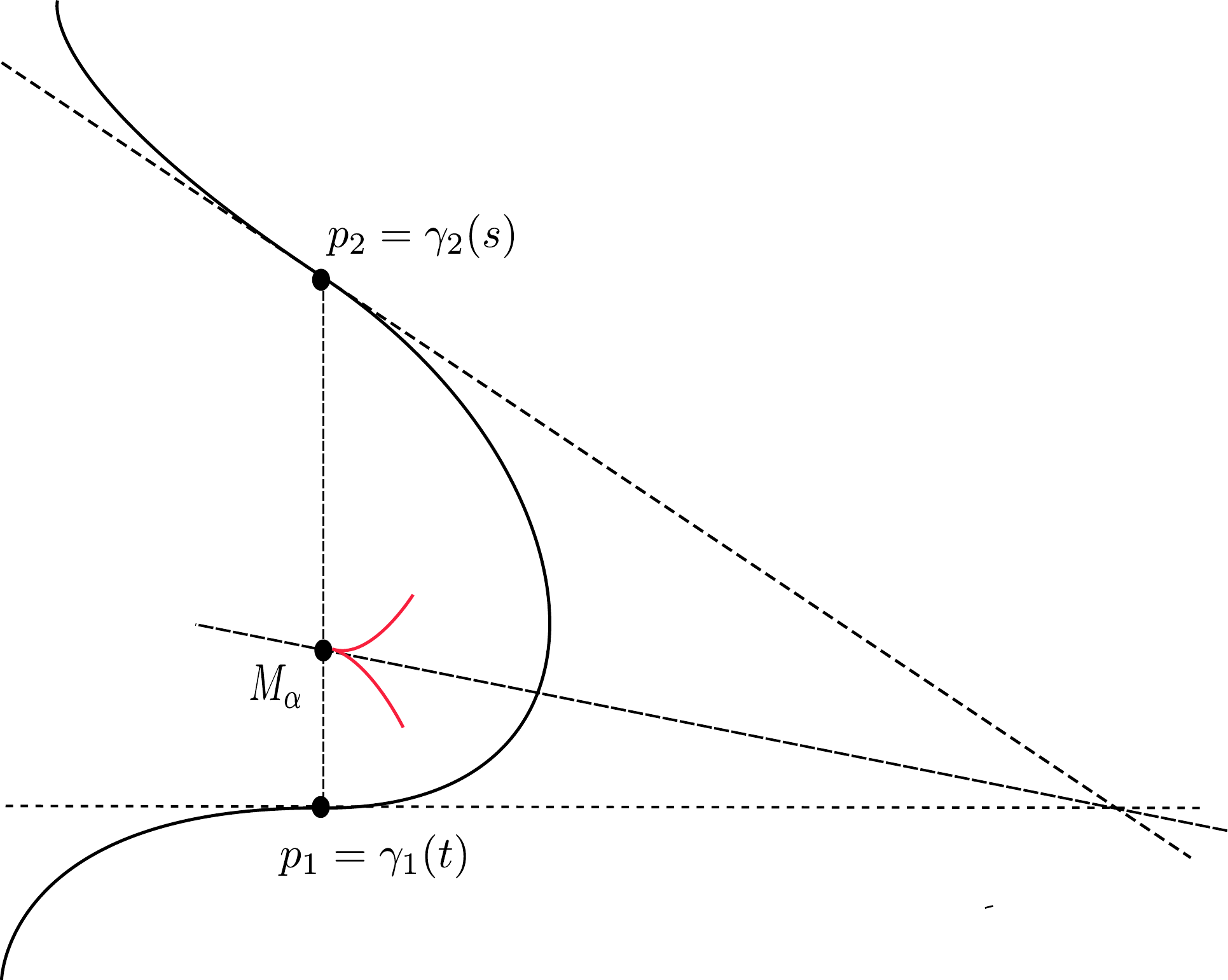}
	\caption{The $AEIL$ (red curve) of a non-oval curve $\gamma$ (black curve), when $p_1$ and $p_2$ are inflection points.}\label{fig:InfNP}
\end{figure}
\subsection{Non-coincident parallels}
Suppose that the points $p_1$ and $p_2$, given in \eqref{P1andP2}, have paralell tangent lines. Hence, we have $b_1=0$. Also, the equation $A(t,s)=0$ given in \eqref{A}, gives the relation between the parameters $t$ and $s$ in this case.

Using the Implicit Function Theorem for the expression (\ref{A}), one can obtain $t$ as a function of $s$, as follows 
\[ t = 2b_2 s + (-12a_3b_2^2+3b_3) s^2+ O_3(s). 
\] 

\begin{proposition}\label{prop:parallel}
	Let $\gamma$ be a smooth plane curve and $p_1$ and $p_2$, two points of $\gamma$, as given in (\ref{P1andP2}). If the tangent lines of $\gamma$ at the points $p_1$ and $p_2$ are parallel, then the $IPTL$ of the curve $\gamma$ 
	
	\begin{enumerate}
		\item passes through the point $M_\alpha(0,0)=(0,b_0\alpha)$.
		\item  is regular at $s=0$, if and only if $b_2\neq \dfrac{\alpha}{2(\alpha-1)}$.
		\item has an ordinary cusp singularity at $s=0$, if and only if  
		$$	b_2 = \frac{\alpha}{2(\alpha-1)} \,\,\text{and}\,\, b_3\neq \left(\dfrac{\alpha}{\alpha-1}\right)^2 a_3.$$
		\item has a $(3,4)$-cusp singularity at $s=0$ if and only if 
		$$	b_2 = \frac{\alpha}{2(\alpha-1)} \,\,,\,\, b_3 = \left(\dfrac{\alpha}{\alpha-1}\right)^2 a_3\,\,\,\text{and}\,\,\,
		b_4 \neq \left(\dfrac{\alpha}{\alpha-1}\right)^3 a_4.
		$$
	\end{enumerate}
\end{proposition}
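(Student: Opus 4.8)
The plan is to parametrize the $IPTL$ near the origin and then read off the singularity type from its Taylor jets. Since the tangent lines at $p_1(0)$ and $p_2(0)$ are parallel we have $b_1=0$; as $B(0,0)=8(2\alpha b_2-\alpha+1)b_0^3\neq0$, the only branch of $\det M=0$ (see \eqref{detM}) through the origin is $A(t,s)=0$ (recall \eqref{A}), and by the Implicit Function Theorem this is a graph $t=t(s)$ with $t(s)=2b_2 s+(3b_3-12a_3b_2^2)s^2+O_3(s)$. First I would note that near the origin this branch coincides with the parallel–tangent locus $f'(t)=g'(s)$: both are smooth curve germs with the same $1$-jet, and the parallel–tangent locus is contained in $\{\det M=0\}$ because on it the point $X=M_\alpha$ already solves $F=F_s=F_t=0$ — indeed the intermediate line is then parallel to the common tangent, while $F(t,s,M_\alpha(t,s))\equiv0$ gives $F_t(t,s,M_\alpha)=-\nabla_X F\cdot\partial_t M_\alpha$ and $F_s(t,s,M_\alpha)=-\nabla_X F\cdot\partial_s M_\alpha$, and $\partial_t M_\alpha=(1-\alpha)p_1'(t)$, $\partial_s M_\alpha=\alpha\, p_2'(s)$ are both parallel to that tangent, hence orthogonal to $\nabla_X F$. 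Consequently the envelope point over $(t(s),s)$ is $M_\alpha(t(s),s)$ itself, so the $IPTL$ is the curve
\[
\Gamma(s)=M_\alpha(t(s),s)=(1-\alpha)\,p_1(t(s))+\alpha\,p_2(s).
\]
(Equivalently one may substitute $t=t(s)$ into the discriminant solution; the geometric observation above is cleaner and avoids the non-parallelism hypothesis of Theorem \ref{Theo:Solution of Discr}.)

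Items (1) and (2) are then immediate. Evaluating, $\Gamma(0)=(1-\alpha)p_1(0)+\alpha p_2(0)=(0,\alpha b_0)$, which is (1). Since on the parallel–tangent locus $p_1'(t(s))=p_2'(s)$, we get $\Gamma'(0)=\big((1-\alpha)t'(0)+\alpha\big)p_2'(0)=\big(2(1-\alpha)b_2+\alpha\big)(1,0)$, which vanishes precisely when $b_2=\frac{\alpha}{2(\alpha-1)}$; since the curve will be shown to be singular at $s=0$ whenever $b_2=\frac{\alpha}{2(\alpha-1)}$ (items (3)–(4)), the $IPTL$ is regular at $s=0$ if and only if $b_2\neq\frac{\alpha}{2(\alpha-1)}$, which is (2).

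For (3) I would assume $b_2=\frac{\alpha}{2(\alpha-1)}$, so $\Gamma'(0)=0$, and compute the $2$- and $3$-jets of $\Gamma$. The key structural facts, obtained by differentiating and simplifying with the standing relation $2(1-\alpha)b_2+\alpha=0$ (which also gives $4b_2^2=(\tfrac{\alpha}{\alpha-1})^2$), are that the second coordinate of $\Gamma''(0)$ collapses to $2b_2\big(2(1-\alpha)b_2+\alpha\big)=0$, so $\Gamma''(0)=\big(6(1-\alpha)\Delta,\,0\big)$ with $\Delta:=b_3-(\tfrac{\alpha}{\alpha-1})^2a_3$, and that the second coordinate of $\Gamma'''(0)$ equals $-12\alpha\,\Delta$. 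Hence if $\Delta\neq0$ then $\Gamma''(0)\neq0$ and $\det\!\big(\Gamma''(0),\Gamma'''(0)\big)=-72\,\alpha(1-\alpha)\,\Delta^2\neq0$, so $\Gamma$ is $\mathcal{A}$-equivalent to $s\mapsto(s^2,s^3)$, an ordinary cusp; conversely if $\Delta=0$ the $2$-jet of $\Gamma$ vanishes and the singularity is not an ordinary cusp, which with the discussion of $\Gamma'(0)$ proves the equivalence in (3). For (4) I would add $b_3=(\tfrac{\alpha}{\alpha-1})^2a_3$, so $\Gamma''(0)=0$ and, by the formula above, the second coordinate of $\Gamma'''(0)$ also vanishes; carrying the expansion of $t(s)=(f')^{-1}(g'(s))$ one order further one checks that, under the two standing hypotheses, the $s^3$-coefficient of $t(s)$ equals $4\big(b_4-(\tfrac{\alpha}{\alpha-1})^3a_4\big)$, whence $\Gamma'''(0)=\big(24(1-\alpha)(b_4-(\tfrac{\alpha}{\alpha-1})^3a_4),\,0\big)$ and the second coordinate of $\Gamma''''(0)$ equals $-72\,\alpha\big(b_4-(\tfrac{\alpha}{\alpha-1})^3a_4\big)$. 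Therefore if $b_4\neq(\tfrac{\alpha}{\alpha-1})^3a_4$ then $\Gamma'''(0)\neq0$ and $\det\!\big(\Gamma'''(0),\Gamma''''(0)\big)\neq0$, so $\Gamma$ is $\mathcal{A}$-equivalent to $s\mapsto(s^3,s^4)$; and if $b_4=(\tfrac{\alpha}{\alpha-1})^3a_4$ then $\Gamma'''(0)=0$ and the singularity is strictly more degenerate than a $(3,4)$-cusp. This is (4).

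The main obstacle is computational rather than conceptual: in (3)–(4) one must propagate the relations $b_2=\frac{\alpha}{2(\alpha-1)}$ and then $b_3=(\tfrac{\alpha}{\alpha-1})^2a_3$ through the successive derivatives of $\Gamma$ and verify that at each stage the ``expected'' coordinate collapses, leaving a single scalar $b_{k+1}-(\tfrac{\alpha}{\alpha-1})^{k}a_{k+1}$ that simultaneously governs the order of vanishing of $\Gamma$ and the transversality (linear independence of consecutive derivatives) needed to land in the normal forms $(s^2,s^3)$ and $(s^3,s^4)$. The other point requiring care is the identification, sketched in the first paragraph, of the $IPTL$ with the locus of intermediate points $M_\alpha$ over the parallel–tangent locus — the natural analogue of the description of the $MPTL$ as the locus of midpoints of parallel–tangent chords.
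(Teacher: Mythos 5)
Your proof is correct and follows essentially the same route as the paper, which applies the standard jet criteria ($\theta'(0)\neq 0$ for regularity, $[\theta''(0),\theta'''(0)]\neq 0$ for an ordinary cusp, $[\theta'''(0),\theta^{(4)}(0)]\neq 0$ for a $(3,4)$-cusp) to the curve obtained from the relation $t=t(s)$ and declares the rest a straightforward calculation. Your identification of the envelope point over a parallel-tangent pair with $M_\alpha(t(s),s)$ itself is a clean way to obtain that parametrization without invoking Theorem \ref{Theo:Solution of Discr} (whose formula assumes non-parallel tangents), and your expansions --- $\Gamma''(0)=(6(1-\alpha)\Delta,0)$ and $y'''(0)=-12\alpha\Delta$ with $\Delta=b_3-\bigl(\alpha/(\alpha-1)\bigr)^2a_3$, and the $s^3$-coefficient $4\bigl(b_4-(\alpha/(\alpha-1))^3a_4\bigr)$ of $t(s)$ --- all check out.
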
 
\begin{proof}
	The proof comes from a straightforward calculation since a plane curve $\theta$ is regular at $t=0$ if $\theta'(0)\neq 0$ and has an ordinary cusp (resp. a $(3,4)$-cusp), singularity at the origin if $\theta'(0)=0$ and $[\theta^{(2)}(0),\theta^{(3)}(0)]\neq 0$ (resp. if $\theta'(0)=[\theta^{(2)}(0),\theta^{(3)}(0)]=0$ and $[\theta^{(3)}(0),\theta^{(4)}(0)]\neq 0$), where $[\cdot,\cdot]$ stands for the determinant of the $2\times 2$ matrix defined by $\mathbb{R}^2$ vectors.
\end{proof}

In the next proposition, we investigate the case when there is an inflection point. Let $p_1=(t,a_3 t^3+O(t^4))$ and $p_2$ is the same as in \eqref{P1andP2}.

\begin{proposition}\label{prop:parallel_p1_Inflection}
	By the above assumptions,
	if one of the points $p_1$ and $p_2$ or both of them have inflection at the origin, then the curve $IPTL$ passes through the point $M_{\alpha}$, and is regular at the origin, when $\alpha\in (0,1)$.

 Moreover, for $b_2\neq 0$, the curve $\gamma$ has an inflection of order $k$ at $p_1$, if and only if $IPTL$ has an inflection of order $k$ at the origin, for $k=1,2$.       
\end{proposition}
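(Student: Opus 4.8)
The plan is to run the coordinate computation already used for Proposition~\ref{prop:parallel} and for item~(ii) of Theorem~\ref{theo:non_Paralel}, but now in the doubly degenerate situation of a non-coincident parallel pair one of whose members is an inflection. I would work throughout with the explicit Monge data \eqref{P1andP2} and the matrix $M$ of \eqref{Matrix}, rather than with the co-normal formula of Theorem~\ref{Theo:Solution of Discr}: when $p_1$ is an inflection the affine normal, hence $\nu_1$, and hence the coefficient $b$ of Lemma~\ref{deco} (whose numerator and denominator both vanish at $t=s=0$) are ill-defined, so the clean route is the determinant of $M$. Substituting $p_1=(t,a_3t^3+a_4t^4+O_5(t))$ and $p_2=(s,b_0+b_1s+b_2s^2+b_3s^3+b_4s^4+O_5(s))$ into \eqref{intermediate}: since the tangent of $\gamma$ at $p_1(0)$ is horizontal, parallelism of the two base tangents forces $b_1=0$, and then the factor of $\det(M)$ that must vanish on $IPTL$ is, to leading order, the ``parallel tangents'' relation $\tilde A(t,s)=f'(t)-g'(s)=3a_3t^2+4a_4t^3-2b_2s-3b_3s^2+\cdots$, which now carries \emph{no} linear-in-$t$ term because $f''(0)=0$. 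Hence the branch $\{\tilde A=0\}$ through the origin is tangent to the $t$-axis, so $s$, not $t$, is a smooth function of the other parameter: for $b_2\neq 0$ the implicit function theorem gives $s=s(t)=\frac{3a_3}{2b_2}t^2+\frac{2a_4}{b_2}t^3+O_4(t)$, degenerating to $s(t)=\frac{2a_4}{b_2}t^3+O_4(t)$ once $a_3=0$; when $b_2=0$, i.e.\ $p_2$ is an inflection as well, $\tilde A$ has a nondegenerate quadratic leading part and one resolves the branch(es) through the origin in the same way. Feeding $s=s(t)$ back into $F=F_s=F_t=0$ produces a local parametrization $\theta(t)=(X(t),Y(t))$ of $IPTL$.

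For the first assertion I would argue conceptually instead of grinding the formula. On the parallelism locus the intermediate line is the unique line through $M_\alpha$ in the common tangent direction $u$; writing it, with $s=s(t)$, as $\{\,M_\alpha(t)+ru(t):r\in\R\,\}$, a one-line envelope computation shows that the envelope point over the value $t$ equals $M_\alpha(t)$ precisely when $M_\alpha'(t)$ is parallel to $u(t)$. At $t=0$ we have $s(0)=s'(0)=0$ and $p_1'(0)=p_2'(0)=(1,0)=u(0)$, so $M_\alpha'(0)=(1-\alpha)(1,0)$ is parallel to $u(0)$; hence $\theta(0)=M_\alpha(0)=M_\alpha(0,0)=(0,\alpha b_0)$, as claimed, and the same holds in all three sub-cases. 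Regularity then follows from the criterion recalled in the proof of Proposition~\ref{prop:parallel}: I would compute $\theta'(0)$ from the explicit envelope formula after the substitution (its direction must be that of the intermediate line at $(0,0)$, i.e.\ horizontal) and verify $\theta'(0)\neq 0$; the nonvanishing amounts to a denominator, a rational expression in $b_0,b_2,\alpha$, not being zero for $\alpha\in(0,1)$ and $b_0\neq 0$ — which is exactly where $\alpha\in(0,1)$ is used.

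For the ``moreover'' part, fix $b_2\neq 0$. Since $IPTL$ is regular at $\theta(0)=M_\alpha(0,0)$ with horizontal tangent, take $\theta'(0)=(X'(0),0)$ with $X'(0)\neq 0$; then the inflection order of $IPTL$ at the origin is the order of vanishing at $t=0$ of $[\theta'(t),\theta''(t)]=X'(t)Y''(t)-Y'(t)X''(t)$, which equals the order of vanishing of $Y''(t)$ because $X'(t)\neq 0$ near $0$. So I would Taylor-expand $Y(t)$, after inserting $s=s(t)$, to the required order. One finds $Y'(0)=Y''(0)=0$ — so $IPTL$ does have an inflection at the origin — with $Y'''(0)=c\,a_3$, where $c$ is a rational function of $b_0,b_2,\alpha$ that is nonzero for $\alpha\in(0,1)$ and $b_0,b_2\neq 0$; hence $Y''$ vanishes to order exactly $1$ iff $a_3\neq 0$, i.e.\ iff $\gamma$ has an ordinary ($k=1$) inflection at $p_1$. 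When $a_3=0$, one uses the refined $s(t)=\frac{2a_4}{b_2}t^3+O_4(t)$ and carries the expansion one term further: $Y'''(0)=0$ and $Y^{(4)}(0)=c'\,a_4$ with $c'\neq 0$ under the same restrictions, so the inflection order of $IPTL$ equals $2$ iff $a_4\neq 0$, i.e.\ iff $\gamma$ has a $k=2$ inflection at $p_1$. This gives the equivalence for $k=1,2$.

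The main obstacle is the bookkeeping: one must recompute the factorization \eqref{detM} and the leading part of $\tilde A$ in this degenerate regime, recognize that the correct parameter along $IPTL$ is $t$ (since $\{\tilde A=0\}$ is tangent to the $t$-axis at an inflection of $p_1$), and then push the expansions of $\theta(t)$ and of $Y''(t)$ far enough — through order $t^3$, and through order $t^4$ when $a_3=0$, so that $a_4$, $b_3$, $b_4$ all enter — while checking at each stage that the constants $c$, $c'$ and the denominators appearing do not vanish for $\alpha\in(0,1)$ and $b_0,b_2\neq 0$. A minor additional point is to confirm that the explicit envelope formula does reduce, at $t=0$, to $M_\alpha(0,0)$, consistently with the conceptual argument above.
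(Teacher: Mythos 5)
For the generic sub-case $b_2\neq 0$ your computation is the paper's own: the parallel-tangent relation $f'(t)=g'(s)$ gives $s=\frac{3a_3}{2b_2}t^2+\frac{2a_4}{b_2}t^3+O_4(t)$, substitution yields a parametrization of the form $\bigl((1-\alpha)t+O(t^2),\,\alpha b_0+(1-\alpha)a_3t^3+O(t^4)\bigr)$, regularity comes from the nonvanishing first component for $\alpha\in(0,1)$, and the inflection orders are read off from $Y'''(0)\propto a_3$ and, when $a_3=0$, $Y^{(4)}(0)\propto a_4$; all of this matches the paper and is correct. One caveat about your extra conceptual step: the criterion ``envelope point $=M_\alpha(t)$ iff $M_\alpha'(t)\parallel u(t)$'' presupposes $[u(t),u'(t)]\neq 0$, and that determinant equals $f''(t)$, which vanishes exactly at $t=0$ because $p_1$ is an inflection; so verifying the parallelism only at $t=0$ does not by itself close the argument. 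It is easily repaired: along the whole parallel locus both $\gamma_1'(t)$ and $\gamma_2'(s(t))$ are parallel to $u(t)$, hence $[M_\alpha'(t),u(t)]\equiv 0$, the characteristic point is $M_\alpha(t)$ for every $t\neq 0$, and $\theta(0)=M_\alpha(0,0)$ follows by continuity (indeed the $IPTL$ is exactly the locus of intermediate points of parallel pairs, which is what the paper's explicit parametrization exhibits).

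The genuine divergence is the sub-case where $p_2$ is also an inflection ($b_2=0$). You propose to stay with $\tilde A(t,s)=f'(t)-g'(s)=3a_3t^2-3b_3s^2+O_3(t,s)=0$ and ``resolve the branch(es) in the same way''; but this zero set is a quadratic cone: for $a_3b_3<0$ it has no real branch other than the origin (there are no nearby parallel pairs at all), and for $a_3b_3>0$ it has two transverse branches $s=\pm\sqrt{a_3/b_3}\,t+\cdots$, so a single application of the implicit function theorem does not apply and the outcome is not the single regular curve the proposition asserts. The paper instead changes relation in this sub-case: it imposes the condition $\nu_1(C)=-\beta\nu_2(C)$ of Theorem \ref{pre} (in its cubed, polynomial form, since $\nu_1,\nu_2$ themselves degenerate at inflections), which yields one smooth branch $s=\frac{a_3}{b_3}\frac{1-\alpha}{\alpha}t+O(t^2)$ and hence the stated regular curve through $(0,\alpha b_0)$. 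Whether that branch deserves the name $IPTL$ is itself debatable --- away from the origin the tangents along it are not parallel --- but it is what the paper computes, and your proposal neither reproduces it nor completes the branch analysis of the relation you chose. Since the ``moreover'' clause is restricted to $b_2\neq 0$, only the first assertion is affected, but as written your treatment of the doubly-inflectional case is a gap.
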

\begin{proof}
	 Firstly, suppose that $p_2$ is not an inflection at $s=0$ i.e. $b_2\neq 0$.  Using the expression \eqref{A}, and by a straightforward calculation, one can write $s$ as a function of $t$. Therefore, we have the parametrization of the curve $IPTL$, as follows
	\[
	\left((1-\alpha) t + O(t^2), \alpha b_0 +a_3(1-\alpha) t^3+\frac{9\alpha a_3^2+4a_4 b_2(1-\alpha)}{b_2} t^4+O(t^5)\right). 
	\]  
	Now, suppose that the curve $\gamma$ has an inflection at the point $p_2$ as well. Therefore, using the relation $\nu_1(C)=-\beta\nu_2(C)$ where $\beta=\left((1-\alpha)/\alpha\right)^\frac{1}{3}$ (see Theorem \ref{pre}), we obtain:
	\[
	s= - \frac{a_3}{b_3}\left(\frac{\alpha-1}{\alpha}\right) t+O(t^2).
	\]  
By replacing this in \eqref{intermediate}, we obtain the parametrization of the envelope of the intermediate lines
\[
F(t,\alpha)= \left(\frac{h_1(\alpha)}{2\alpha b_3} t+O(t^2) , \alpha b_0 + \frac{h_2(\alpha)}{2\alpha^2 b_3^2} t^3 + O(t^4) \right),
\]
where $h_1$ and $h_2$ are smooth functions with respect to $\alpha$ and their $1$-jets are:
\begin{align*}
	j^1 h_1(\alpha) = a_3-2(2a_3+b_3)\alpha\quad,\quad	j^1 h_2(\alpha) = a_3^3-3a_3^2(2a_3-b_3)\alpha.
\end{align*}
Figure \ref{fig:InfP} illustrates the $IPTL$ of the curve $\gamma$, when $p_1$ and $p_2$ are inflections with parallel tangents.
\end{proof}
\begin{figure}[tp]
	\includegraphics[scale=0.4]{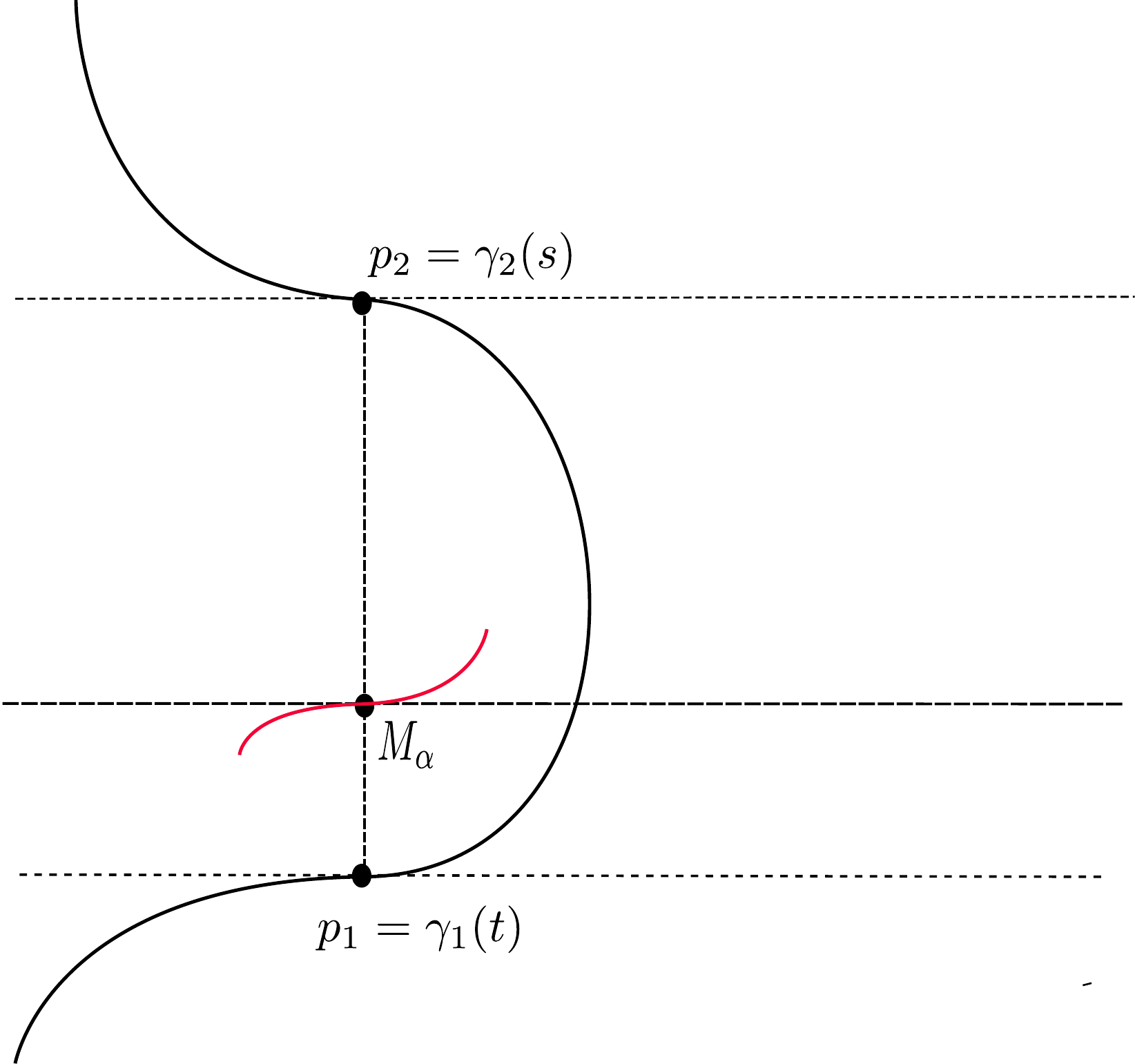}
	\caption{The $IPTL$ (red curve) of the curve $\gamma$ (black curve) when $p_1$ and $p_2$ are inflections with parallel tangent lines.}\label{fig:InfP}
\end{figure}

\begin{remark}

\
\noindent {\bf (1)} Notice that when $\alpha=1/2$, the sets $AEIL=AESS$ and $IPTL=MPTL$ meet at their singularities (see Proposition 2.4.9 of \cite{Holtom}). For $\alpha\neq1/2$, these sets are disjoint.
\medskip

\noindent {\bf (2)} It is possible to determine the first $\alpha=\alpha_0$ for which the cusps appear ($\alpha$-born) in $IPTL$ or $AEIL$. Indeed, in the case of the $IPTL$ (resp. $AEIL$), one can find the relation between the parameters, as mentioned in this section. Then, according to the regularity conditions given in Proposition \ref{prop:parallel} (resp. Theorem \ref{theo:non_Paralel}), the $\alpha$-born can be found effortlessly.
\end{remark}
\subsection{Coincident tangent lines }
Here, we study the limit case of the envelope of intermediate lines for the smooth plane curve $\gamma$. This means that we desire to investigate the behavior of the intermediate lines when one point tends to the other point. For such, consider the local parametrization of the curve $\gamma$ at each point, i.e. let
$$\gamma_1(t)=(t, f(t)) \text{  \space and \space }  \gamma_2(s)=(s, f(s)). $$ 

Let  $C=(t-s, f(t)-f(s)), M_\alpha = \left((1-\alpha)t+\alpha s, (1-\alpha)f(t) + \alpha f(s)\right), N_1(t)=(-f^{\prime}(t), 1)$  and $N_2(s)=(-f^{\prime}(s),1),$ be the chord, the intermediate point and the Euclidean normal vectors at $\gamma_1$ and $\gamma_2$, respectively.

Note that  $\langle N_i, U \rangle=[\gamma_i^{\prime}, U]$, for $U \in \mathbb{R}^2$ and $i=1,2$, where $\langle\cdot,\cdot\rangle$ denotes the usual inner product of $\mathbb{R}^2$. Therefore we have
\begin{align}\label{con2}
\nu_1(U)=\frac{[\gamma_1^{\prime}, U]}{[\gamma_1^{\prime}, \gamma_1^{\prime\prime}]^{\frac{1}{3}}}=\frac{\langle N_1, U \rangle}{[\gamma_1^{\prime}, \gamma_1^{\prime\prime}]^{\frac{1}{3}}}\text{\space and \space} \nu_2(U)=\frac{[\gamma_2^{\prime}, U]}{[\gamma_2^{\prime}, \gamma_2^{\prime\prime}]^{\frac{1}{3}}}=\frac{\langle N_2, U \rangle}{[\gamma_2^{\prime}, \gamma_2^{\prime\prime}]^{\frac{1}{3}}}. 
\end{align}

Thus, if we denote $\langle N_i, U \rangle = N_i(U)$, then the equation of the intermediate line becomes
$$(1-\alpha)N_2(C)N_1(X-M_\alpha)+\alpha N_1(C)N_2(X-M_\alpha)=0.  $$

From Theorem \ref{pre}, we obtain

$$\frac{N_1(C)  }{(f^{\prime\prime}(t_1))^{\frac{1}{3 }}}=-\lambda\frac{N_2(C)  }{(f^{\prime\prime}(t_2))^{\frac{1}{3 }}} \Rightarrow \frac{N_1(C)}{N_2(C)}=-\left(\frac{(1-\alpha)f^{\prime\prime}(t_1)}{\alpha f^{\prime\prime}(t_2)} \right)^{\frac{1}{3}}.   $$

Thus, the angular coefficient $A=A(s,t)$ of the intermediate line is given by


\begin{align*}
	  A(s,t)= \frac{(1-\alpha)f^{\prime}(t)(\alpha f^{\prime\prime}(s))^{\frac{1}{3}} - \alpha f^{\prime}(s)((1-\alpha)f^{\prime\prime}(t))^{\frac{1}{3}}                    }{(1-\alpha)(\alpha f^{\prime\prime}(s))^{\frac{1}{3}}-   \alpha((1-\alpha)f^{\prime\prime}(t))^{\frac{1}{3}}                        }.
\end{align*}
Therefore, for $\alpha=1/2$ we have
\begin{align*}
\lim_{s\longrightarrow t}A(s,t)= \frac{    \frac{1}{3}f'(t)(f''(t))^{-\frac{2}{3}}f'''(t) - f''(t)^\frac{4}{3} }{\frac{1}{3}(f''(t))^{-\frac{2}{3}}f'''(t)},
\end{align*} 
which is the affine normal of $\gamma$ at the point $p_1$.

For $\alpha\neq 1/2$, we have:
\begin{align*}
\lim_{s\longrightarrow t}A(s,t)= \frac{f'(t)(f''(t))^\frac{1}{3}((1-\alpha)\alpha^\frac{1}{3} -\alpha(1-\alpha)^\frac{1}{3})    }{(f''(t))^\frac{1}{3}((1-\alpha)\alpha^\frac{1}{3} -\alpha(1-\alpha)^\frac{1}{3})}=f'(t).
\end{align*}
Therefore, we have the following result:
\begin{theorem}
	Let $\gamma$ be a smooth plane curve, $p_1=\gamma_1(t)=(t,f(t))$ and $p_2=\gamma_2(s)=(s,f(s))$ two points of $\gamma$.
	When the parameter $s$ tends to $t$, then the intermediate line ($\alpha\neq 1/2$) tends to the tangent line of $\gamma$, at the point $p_1$.
\end{theorem}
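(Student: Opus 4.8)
The plan is to reduce the statement to the already-computed limit of the angular coefficient $A(s,t)$ of the intermediate line together with a control on the base point through which the line passes. The theorem asserts two things implicitly: that the slope of the intermediate line converges to $f'(t)$ as $s\to t$, and that the limiting line is precisely the tangent line at $p_1$, not merely \emph{a} line with that slope. So first I would recall that, by Theorem \ref{pre} (in the Euclidean reformulation via the $N_i$ given just above the statement), any pair $(s,t)$ contributing to the envelope satisfies $N_1(C)/N_2(C)=-\bigl((1-\alpha)f''(t)/(\alpha f''(s))\bigr)^{1/3}$, and this is exactly what was used to derive the closed form for $A(s,t)$. Then I would simply invoke the displayed computation: for $\alpha\neq 1/2$, both numerator and denominator of $A(s,t)$, after factoring out $(f''(t))^{1/3}\bigl((1-\alpha)\alpha^{1/3}-\alpha(1-\alpha)^{1/3}\bigr)$ in the limit $s\to t$, reduce to $f'(t)$ and $1$ respectively, so $\lim_{s\to t}A(s,t)=f'(t)$. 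The key point to emphasize is that the factor $(1-\alpha)\alpha^{1/3}-\alpha(1-\alpha)^{1/3}$ is nonzero precisely because $\alpha\neq 1/2$ (it vanishes only when $\alpha^{1/3-1}=(1-\alpha)^{1/3-1}$, i.e. $\alpha=1-\alpha$), which is what makes the cancellation legitimate and distinguishes this case from the $\alpha=1/2$ case where one gets the affine normal instead.

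Next I would address the base point. The intermediate line passes through $M_\alpha=(1-\alpha)\gamma_1(t)+\alpha\gamma_2(s)$; as $s\to t$ this point tends to $(1-\alpha)\gamma_1(t)+\alpha\gamma_1(t)=\gamma_1(t)=p_1$. Hence the limiting line is the line through $p_1$ with slope $f'(t)$, which is by definition the Euclidean tangent line of $\gamma$ at $p_1$. To make the limit argument rigorous I would note that the space of (affine) lines in $\mathbb{R}^2$ can be locally parametrized near a non-vertical line by (slope, intercept) or, more invariantly, by a point and a direction, and that convergence of a point on the line and of the slope/direction gives convergence of the line in this topology; since both $M_\alpha\to p_1$ and $A(s,t)\to f'(t)$, the intermediate line converges to the tangent line at $p_1$. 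One should also observe that for $s$ close enough to $t$ the tangent lines $l_1,l_2$ are non-parallel (since $f''(t)\neq 0$ at a non-inflection point, $f'(s)\neq f'(t)$ for $s\neq t$ near $t$), so case $(i)$ of the definition of the intermediate line applies and $A(s,t)$ is genuinely the slope of the line through $R$ and $M_\alpha$.

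The only genuine subtlety — and the step I expect to be the main obstacle to write cleanly — is the differentiability/continuity of $A(s,t)$ as $s\to t$: the formula for $A(s,t)$ involves $\bigl((1-\alpha)f''(t)\bigr)^{1/3}$ and $\bigl(\alpha f''(s)\bigr)^{1/3}$, and one must be careful that these cube roots are the correct branches inherited from the affine arc-length normalization and that the common factor extracted in the limit does not vanish. Since we are working at a point $p_1$ with $f''(t)=\kappa(t)\neq 0$ (a non-inflection point), $f''$ keeps a constant sign on a neighborhood, the real cube root is smooth there, and the indeterminate form $0/0$ obtained by naively setting $s=t$ is resolved precisely by factoring out $(f''(t))^{1/3}\bigl((1-\alpha)\alpha^{1/3}-\alpha(1-\alpha)^{1/3}\bigr)\neq 0$, exactly as displayed before the statement. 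Thus the computation already carried out in the excerpt \emph{is} the proof, and what remains is only the bookkeeping: combining $\lim_{s\to t}A(s,t)=f'(t)$ with $\lim_{s\to t}M_\alpha=p_1$ to conclude convergence of the lines to the tangent at $p_1$. I would close by remarking that this is the expected generalization of the $\alpha=1/2$ situation, where the analogous limit produces the affine normal (the Affine Evolute being the limit of the $EML$), whereas for $\alpha\neq 1/2$ the limit set $CTL$ degenerates to $\gamma$ itself.
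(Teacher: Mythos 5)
Your proposal is correct and follows essentially the same route as the paper: the statement is obtained by computing the limit of the angular coefficient $A(s,t)$ (itself derived from the relation of Theorem \ref{pre}), which equals $f'(t)$ when $\alpha\neq 1/2$ because the common factor $(1-\alpha)\alpha^{1/3}-\alpha(1-\alpha)^{1/3}$ is nonzero exactly for $\alpha\neq 1/2$. Your additions are welcome bookkeeping that the paper leaves implicit (namely that $M_\alpha\to p_1$, so the limiting line is the tangent line itself and not merely a parallel of it); the only small inaccuracy is that for $\alpha\neq 1/2$ there is no $0/0$ indeterminacy to resolve, since the denominator of $A(s,t)$ is already nonzero at $s=t$ when $f''(t)\neq 0$, so the limit is plain continuity.
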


\begin{figure}[htp]
\includegraphics[scale=0.4]{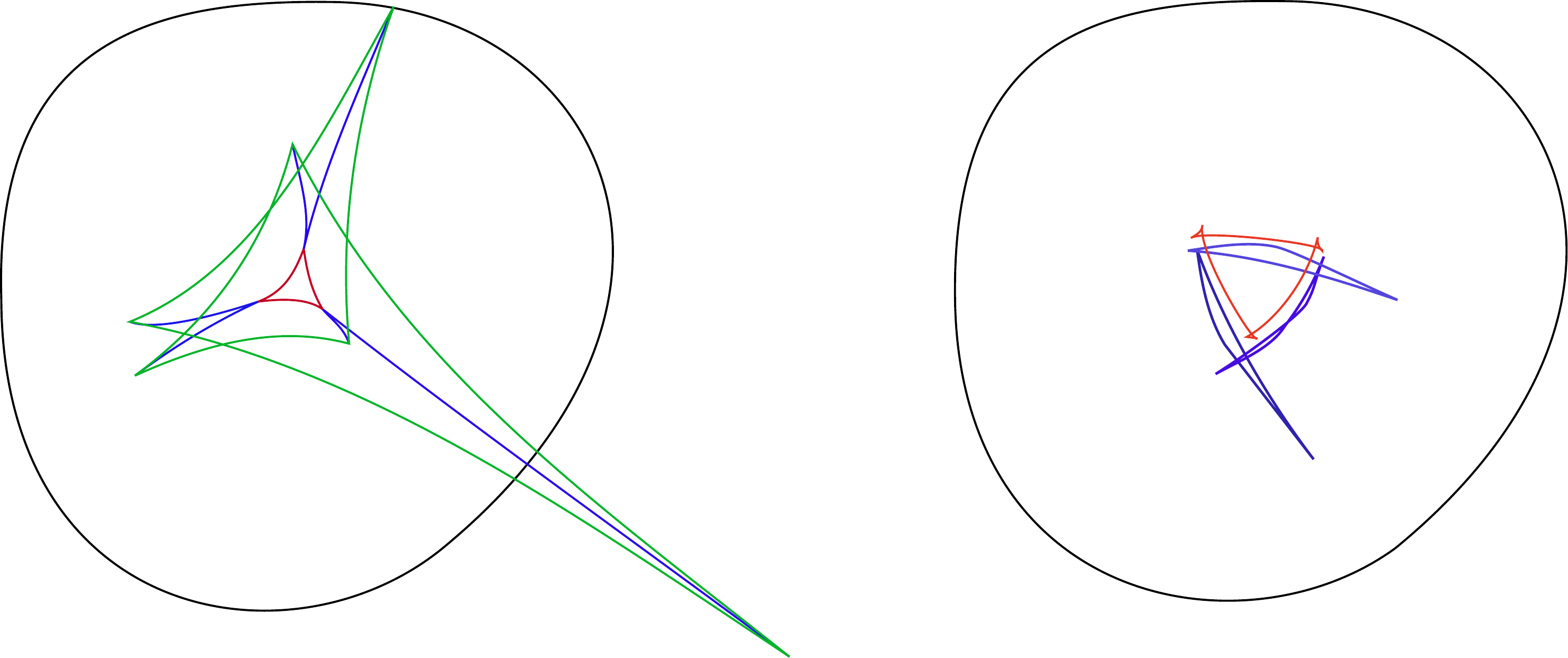}
\caption{A schematic example of Envelope of Intermediate Lines of the bean curve $\gamma(t)=\left(0.1\cos(2\pi t)+\cos(\pi t),0.1\sin(2\pi t+1)+\sin(\pi t)\right)$. The left figure illustrates the envelope when $\alpha=0.5$ where $MPTL$ (red), $AESS$ (blue) and Affine Evolute (green) appear. The right figure illustrates the envelope for $\alpha=0.6$, where $IPTL$ (red) and $AEIL$ (blue) appear.}
\end{figure}

\subsection{Singularity theory approach} 
 Our object in this section is not to study a single value of $\alpha$, but to study what happens to the envelope, as  $\alpha$ varies in $(0,1)\setminus\{1/2\}$.

Here, we shall state the results from the singularity theory, which allow us to make precise statements about the way in which the envelope of the family of intermediate lines evolves as $\alpha$ changes.
\begin{definition}[\cite{Giblin3}]
	For $X=(x,y,\alpha)$ in the discriminant surface \eqref{discriminant}, the function $f(t) = F(X, t)$ has
	\begin{enumerate}
\item type  $A_2$ at $t = t_0$ if $f^\prime(t_0) = f^{\prime\prime}(t_0) = 0, f^{\prime\prime\prime}(t_0) \neq 0$, \\
\item type $A_3$ at $t = t_0$ if $f^\prime(t_0) =  f^{\prime\prime}(t_0) =f^{\prime\prime\prime}(t_0)=0, f^{(4)}(t_0) \neq 0$.
	\end{enumerate}
\end{definition}
\begin{remark}
	The conditions given in Propositions \ref{prop:parallel},\ref{prop:parallel_p1_Inflection} and Theorem \ref{theo:non_Paralel} are the precise conditions for which the envelope has $A_2$ or $A_3$ singularity at the origin. 
\end{remark}
The additional required condition, besides $A_2$ or $A_3$ singularity, is presented bellow. 

\begin{definition}[\cite{giblin4}]
	Suppose that $t=t(s)$ and $F=F_s=0$ at $(X_0, s_0)=(x_0,y_0,\alpha_0,s_0)$. Also suppose that $f(s) = F(X_0, s)$ has an $A_r$-singularity at $s_0$. Consider the partial derivatives $F_{x}$, $F_{y}$ and $F_{\alpha}$ with respect to the parameters $x_i$, evaluated at $X_0$,
	their Taylor polynomials $\mathcal{T}_i$ up to degree $r-1$, expanded about $s_0$ (so these have $r$ terms). The family $F(X, s)$ is called a versal unfolding of $f$ at $s_0$ if the $\mathcal{T}_i$ span a vector space of dimension $r$. Thus, if the coefficients in the $\mathcal{T}_i$ are placed as the columns of an $(r-1) \times 3$ matrix, the rank
	is $r-1$. Obviously, this is possible only for $r \leq 3$.
\end{definition}
In the next result, we determine the versality condition of the family of the Affine Envelope of Intermediate Lines (i.e. $AEIL$) for an $A_2$-singularity at the origin. Analogously, one can find the versality conditions for the other cases. 
\begin{theorem} Let $\gamma$ be a closed plane curve, $p_1=\gamma(t)$ and $p_2=\gamma(s)$ two distintc points of $\gamma$ with concurrent tangent lines which have local parametrizations given in \eqref{P1andP2}. Consider $F$ as the family of intermediate lines of $\gamma$, given in \eqref{intermediate} . Suppose that $f(s)=F(x_0,y_0,\alpha_0,s)$ has an $A_2$-singularity at the origin. Then, $F$ is versal, if and only if 
		\[
		a_3=-\frac{1}{6}\frac{5\alpha-1}{\alpha^2b_1}.
		\] 
		
\end{theorem}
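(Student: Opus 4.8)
The plan is to transcribe the versality criterion of the preceding definition into a concrete rank condition in the case $r=2$, and then to collapse that condition to a single polynomial identity in $a_3$, $b_1$ and $\alpha$. First I would record the reductions already forced on the data. Since $p_1$ and $p_2$ have concurrent tangents, the concurrent case of \S\ref{sec:local} applies: the determinant \eqref{detM} can vanish near the origin only if $B(0,0)=0$, which forces $b_2=\frac{\alpha-1}{2\alpha}$, and then the Implicit Function Theorem gives $t=t(s)$ as in \eqref{t}. Substituting $t=t(s)$ into \eqref{intermediate} produces a one-parameter family
\[
F(x,y,\alpha;s)=l_1(\alpha,s)\,x+l_2(\alpha,s)\,y+l_3(\alpha,s),
\]
affine in $(x,y)$, whose coefficients $l_1,l_2,l_3$ are built from the co-normal expressions \eqref{con2} composed with $t=t(s)$. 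Writing $X_0=(x_0,y_0,\alpha_0)$, the hypothesis that $f(s)=F(X_0,s)$ has an $A_2$-singularity at $s=0$ is equivalent, since $(x_0,y_0)$ must be the envelope point of Theorem \ref{Theo:Solution of Discr}, to $F(X_0,0)=F_s(X_0,0)=F_{ss}(X_0,0)=0$ together with $F_{sss}(X_0,0)\neq0$; the vanishing of $F_{ss}$ at the envelope point is exactly the cusp condition ($\delta=0$) of Theorem \ref{theo:non_Paralel}(i), so it pins $b_3$ down in terms of $a_3,b_0,b_1,\alpha$, while $F_{sss}\neq0$ distinguishes the $A_2$ case from $A_3$ and is a non-vanishing condition involving $b_4$.

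Next I would assemble the versality data. Because $F$ is affine in $(x,y)$ we have $F_x=l_1(\alpha,s)$, $F_y=l_2(\alpha,s)$, and $F_\alpha=(\partial_\alpha l_1)\,x+(\partial_\alpha l_2)\,y+\partial_\alpha l_3$, each computed along $t=t(s)$ and evaluated at $X_0$; note that $\partial_\alpha$ also acts on $t(s)$ and on $b_2=\frac{\alpha-1}{2\alpha}$. For $r=2$ the definition asks the three $1$-jets in $s$ about $s_0=0$,
\[
\mathcal{T}_i=\frac{\partial F}{\partial x_i}(X_0,0)+\partial_s\!\Big(\frac{\partial F}{\partial x_i}\Big)(X_0,0)\,s,\qquad x_i\in\{x,y,\alpha\},
\]
to span a $2$-dimensional space; equivalently, after the relations $F(X_0,0)=F_s(X_0,0)=0$ are used to discard the constant row, the $s$-coefficient row $\big(F_{xs},\,F_{ys},\,F_{\alpha s}\big)\big|_{(X_0,0)}$ must make the associated jet matrix attain the maximal rank $r-1$. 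Writing this rank condition out in coordinates is the core of the proof.

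Then I would carry out the expansion. Substitute $t=t(s)$ together with the $A_2$-relations $b_2=\frac{\alpha-1}{2\alpha}$ and the value of $b_3$ from Theorem \ref{theo:non_Paralel}(i), and expand every entry as a power series in $s$; the fractional factors $[\gamma_i',\gamma_i'']^{1/3}$ coming from \eqref{con2} are units at the origin ($f''(0)=1$ and $g''(0)=\frac{\alpha-1}{\alpha}\neq0$), so they expand as ordinary power series and contribute only polynomially. Evaluating the needed first $s$-derivatives at $s=0$, imposing the rank condition, and simplifying causes $b_0$ and $b_4$ to cancel and leaves a single relation between $a_3$, $b_1$ and $\alpha$, which rearranges to $a_3=-\frac{1}{6}\,\frac{5\alpha-1}{\alpha^{2}b_1}$. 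The versality statements for the remaining cases (parallel tangents; inflection points) would follow by the same scheme, using Propositions \ref{prop:parallel} and \ref{prop:parallel_p1_Inflection} in place of Theorem \ref{theo:non_Paralel}.

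The obstacle is computational rather than conceptual: once the versality definition is correctly transcribed for $r=2$, what remains is the bookkeeping of the nested substitution $t=t(s)$ together with simultaneous differentiation in $\alpha$ (which also sees $t(s)$ and $b_2$) and in $s$, all while carrying the cube-root normalisations — a task best entrusted to a computer algebra system. The one delicate point is to verify that the constant terms of the $\mathcal{T}_i$ add nothing to the span, so that versality is genuinely governed by the single minor whose behaviour is equivalent to the displayed identity.
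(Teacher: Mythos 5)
Your overall strategy coincides with the paper's: impose $b_2=\tfrac{\alpha-1}{2\alpha}$, substitute $t=t(s)$ from \eqref{t}, pin down $b_3$ by the $A_2$ condition of Theorem \ref{theo:non_Paralel}(i), and then test versality on the $1$-jets of $F_x$, $F_y$, $F_\alpha$ at $X_0=(-\alpha b_1,0,\alpha)$, delegating the expansion to a computer algebra system. This is exactly what the paper does; it records the resulting linear coefficients $m_{11},m_{12},m_{13}$ and reads the condition off the shared factor $6\alpha^2a_3b_1+5\alpha-1$ of $m_{11}$ and $m_{12}$.

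The genuine gap is in your reduction of the rank condition, which you yourself flag as ``the one delicate point.'' You propose to discard the constant row of the $2\times 3$ jet matrix because $F(X_0,0)=F_s(X_0,0)=0$; but those equations constrain $F$ and $F_s$, not the partial derivatives $F_x$, $F_y$, $F_\alpha$ that enter the versality matrix. From the paper's own expansion $F=\alpha^2b_1^2(-\alpha^2b_1^2-b_1x+y)+(1-\alpha)\alpha b_1^2(-\alpha^2b_1^2+y)+O(s)$ one gets $F_x(X_0,0)=-\alpha^2b_1^3\neq 0$ and $F_y(X_0,0)=\alpha b_1^2\neq 0$, so the constant row is nonzero and cannot be dropped on those grounds. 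If you keep the full $2\times3$ matrix, the minor in the $x,y$ columns vanishes identically (the ratio $m_{11}/m_{12}=-\alpha b_1$ equals $F_x(X_0,0)/F_y(X_0,0)$), so everything hinges on the two minors involving the $\alpha$ column --- a computation your sketch does not perform and which need not reduce to the single factor $6\alpha^2a_3b_1+5\alpha-1$. Related to this, versality (maximal rank) is an open condition, so an equivalence of the form ``versal iff $a_3$ equals one specific value'' should make you check the direction of the final implication carefully: with the paper's own $m_{1j}$, the row $(m_{11},m_{12},m_{13})$ is \emph{nonzero} away from $a_3=-\tfrac{1}{6}\tfrac{5\alpha-1}{\alpha^2 b_1}$, so you must say precisely which determinant is required to vanish or not vanish before the displayed identity can be asserted.
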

\begin{proof} Using the relation between the parameters given in \eqref{t} and the conditions of $A_2$ singularity, as mentioned in Theorem \ref{theo:non_Paralel}, we have
$$F(x,y,\alpha,s)=\alpha^2b_1^2(-\alpha^2b_1^2-b_1x+y)+(1-\alpha)\alpha b_1^2(-\alpha^2b_1^2+y) + O(s).$$

Notice that

\begin{eqnarray*}
j^1\left(\dfrac{\partial F}{\partial x}(X_0,s)\right)(0) & = & m_{11}s  \\ 
j^1\left(\dfrac{\partial F}{\partial y}(X_0,s)\right)(0) & = & m_{21}s \\
j^1\left(\dfrac{\partial F}{\partial \alpha}(X_0,s)\right)(0) & = & m_{31}s,  \\ 
\end{eqnarray*}
where $X_0=(x_0,y_0,\alpha_0)=(-\alpha b_1,0,\alpha)$ and

\begin{eqnarray*}
m_{11} & = & -\dfrac{\alpha b_1^2(6\alpha^2a_3b_1+5\alpha-1)}{2(3\alpha a_3b_1+\alpha+1)} \\
m_{12} & = & \dfrac{b_1(6\alpha^2a_3b_1+5\alpha-1)}{2(3\alpha a_3b_1+\alpha+1)} \\
m_{13} & = & \dfrac{\alpha b_1^3(36\alpha^3a_3^2b_1^2(\alpha-1)+12\alpha^3a_3b_1+9\alpha^2a_3b_1-24\alpha a_3b_1+\alpha^2+2\alpha-5)}{2(3\alpha a_3b_1+\alpha+1)}.
\end{eqnarray*}

The matrix $M=(m_{11},m_{12}, m_{13})$ has rank 1, if and only if $a_3=-\frac{1}{6}\frac{5\alpha-1}{\alpha^2b_1}.$
\end{proof}
 
Now let $X$ denote the parameters $(x,y,\alpha)$ and $X_0$, a fixed value of $X$. Suppose that the family of intermediate lines, $F$, be a versal family of $f(t)=F(X_0,t)$ where $f(t)$ has type $A_k$, $k=2$ or $3$. It is true that the discriminant of $F$, i.e. $\mathcal{D}_F$ given in (\ref{discriminant}), in a small neighborhood  of $X_0$, is locally diffeomorphic to the standard discriminant of a family of an $A_k$-singularity (details about these concepts are found in various places such as \cite{giblin4}). Figure \ref{fig:versal} illustrates the standard transitions of co-dimension one, which are called lips, beaks and swallowtail transitions.

\begin{figure}[htp]
	\includegraphics[scale=0.5]{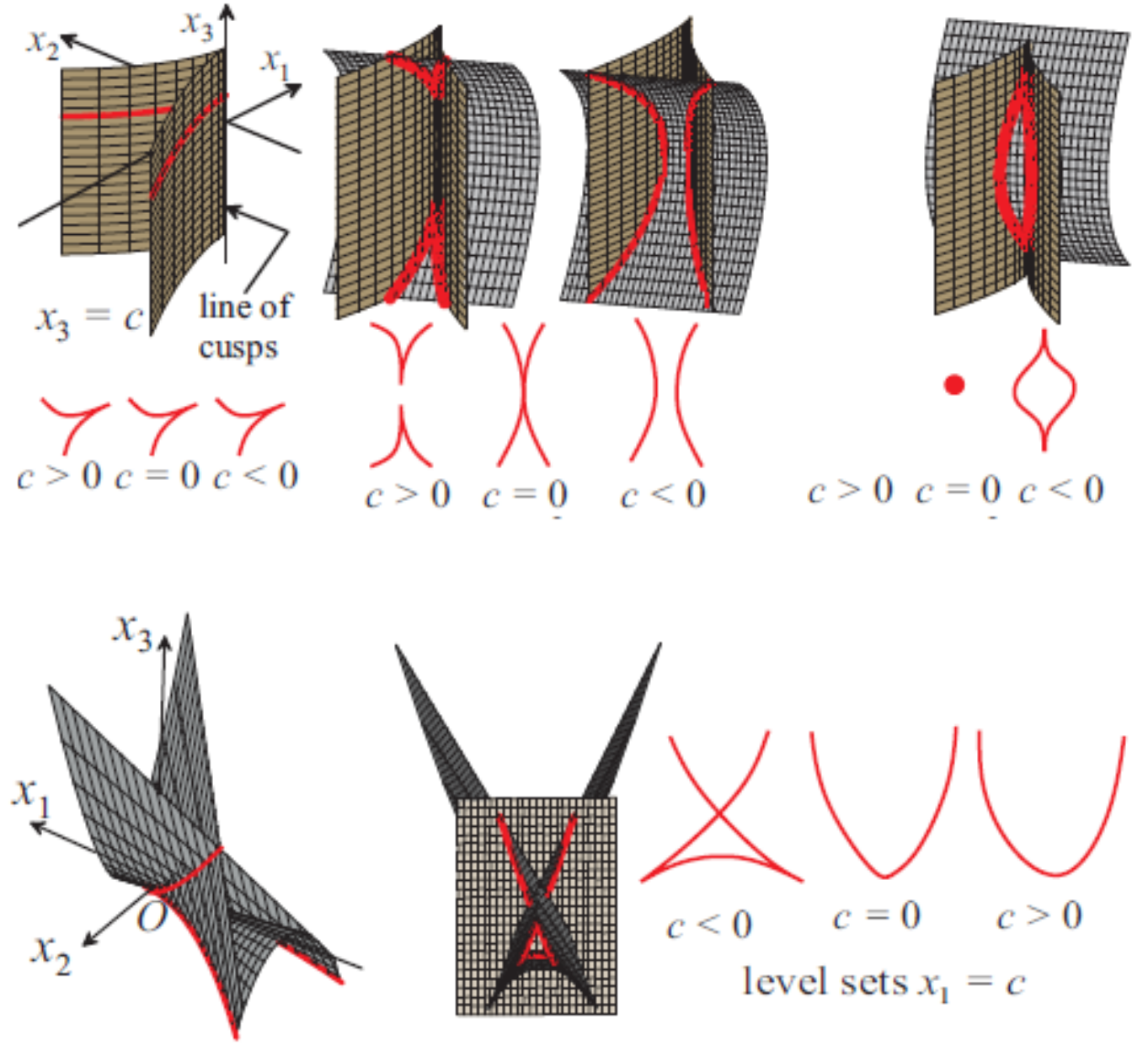}
	\caption{The standard transitions of cusps, beaks, lips (up left to right, respectively) and swallowtail (down) of a versal family $G=G(x_1,x_2,x_3,t)$ of a function of type $A_k$-singularity, $k=2$ or $3$. This Figure is given in \cite{Giblin3} page 7.}\label{fig:versal}
\end{figure} 
\medskip
\newpage
\bibliographystyle{amsplain}

\end{document}